\newfont{\german}       {eufm10 at 12pt}
\colorlet{Cornflower}{gray!50!blue!78}
\newlength{\labwidth}
\newcommand{\labarrow}[1]{
\settowidth{\labwidth}{$\scriptstyle \;\; #1 \;\;$}
\stackrel{#1}{\smash{\hbox to \labwidth{\rightarrowfill}}
\vphantom{\longrightarrow}}
}
\newcommand{\mB} {{\mathcal B}}
\newcommand{\mC} {{\mathcal C}}
\newcommand{\mF} {{\mathcal F}}
\newcommand{\mH} {{\mathcal H}}
\newcommand{\mP} {{\mathcal P}}
\newcommand{\mR} {{\mathcal R}}
\newcommand{\bbF} {{\mathbb F}}
\newcommand{\bbZ} {{\mathbb Z}}
\newcommand{\bbO} {{\mathbb O}}
\newcommand{\bbC} {{\mathbb C}}
\newcommand{\bbH} {{\mathbb H}}
\newcommand{\bbone} {{\bf 1}}
\newcommand{\bbP} {{\mathbb P}}
\newcommand{\bbQ} {{\mathbb Q}}
\newcommand{\bbR} {{\mathbb R}}
\newcommand{\bbS} {{\mathbb S}}
\newcommand{\lra}{\longrightarrow}
\newcommand{\ra}{\rightarrow}
\newcommand{\Hi} {\mbox{{\scriptsize H}}}
\newtheorem{thm}{Theorem}[section]
\newtheorem{Lem}[thm]{Lemma}
\newtheorem{Cor}[thm]{Corollary}
\newtheorem{Thm}[thm]{Theorem}
\newtheorem{Pro}[thm] {Proposition}
\theoremstyle{definition}  
\newtheorem{example}[thm]{Example}
\newtheorem{Exa}[thm] {Example}
\newtheorem{Rem} [thm]{Remark}
\newcommand{\tensor} {\otimes}
\newcommand{\id} {\operatorname{id}}
\newcommand{\bmat}[1] {
  \begin{bmatrix}
    #1
  \end{bmatrix}}
\newcommand {\on}[1] {\operatorname{#1}}
\newcommand{\ind}{{\on{ind}}}
\newcommand {\ul}[1] {\underline{#1}}
\subjclass{}
\begin{document}

\title{Codes, Vertex Operators and Topological Modular Forms}

\address{ 
School of Mathematics and Statistics, The University of Melbourne,
Parkville, Victoria, 3010, Australia\newline  
E-mail address: nganter@unimelb.edu.au
\newline \newline 
Fakult\"at f\"ur Mathematik,  Ruhr-Universit\"at Bochum, IB3/179,
D-44780 Bochum, Germany\newline E-mail address: gerd@laures.de} 


\subjclass[2020]{Primary 55N34; Secondary 55R35, 17B69, 94B05}

\date{\today}

\author{ Nora Ganter and Gerd Laures}
\begin{abstract} We describe a new link between the theory of
  topological modular forms and representations of vertex operator
  algebras obtained by certain lattices. The construction is motivated
  by the arithmetic Whitehead tower of the orthogonal groups. The
  tower discloses the role of codes in representation
  theory. \end{abstract}

\maketitle
\section{Introduction}

During the past decades there have been several attempts in various
directions to relate the theory of topological modular forms $tmf$ to
conformal field theories. Gourbonov, Malikov and Schechtman
\cite{MR1748287} among others   provided an interpretation of the
Witten genus in terms of chiral conformal field theories. More
precisely, they constructed a sheaf of differential vertex operator
algebras, known as the chiral de Rham complex, from which the elliptic
genus or the Witten genus can be recovered. This object might present
the Euler class in $tmf$ but a full geometric description of the
cohomology theory itself is still missing (cf.\cite{MR981378}\cite{MR2079378}). 
\par
This survey article has been written by algebraic topologists who are guided by the purpose to establish  a geometric interpretation of  the $tmf$-cocycles in terms of sheaves of vertex operator algebras. 
Its new perspective comes from coding theory which relates the
representation theory of very specific  lattice algebras to the
arithmetic Whitehead tower of the orthogonal groups. This connection
from  cocycle candidates to the string groups  differs from the usual
one, which only uses  index theory and the Witten genus.  \par 
Fortunately, the representation theory of vertex operator algebras
associated to even lattices is well understood. In this respect,
vertex operator algebras are at an advantage to the more
modern concept of factorization algebras, which is not part of this
work, though. 
It is well known that the representations  are related to modular
forms via the partition function. \par 
The paper contains a new result, Theorem \ref{Main}, establishing a one
to one correspondence between representations of the mentioned lattice
algebras and  the coefficients of $tmf(p)$, topological modular forms
with $\Gamma(p)$-structure. This result can be
interpreted as higher analogue of Atiyah-Bott-Shapiro's result which
relates representations of Clifford algebras to the coefficients
of $K$-theory.
\par
Codes already play an important
role in $K$-theory. This was explored in the work of Jay
A. Wood \cite{MR1022693}.
Phenomena like periodicity  and triality of the
spin groups can be best understood using coding theory. We use the (8,4)-Hamming code  to develop the symmetries and the representations of Clifford groups. These objects come together in the arithmetic Whitehead tower of the orthogonal groups. While climbing up the tower, we explain how vertex operator algebras become part of the picture  and how codes govern their representation theory. 

The article starts with a short introduction to coding theory and
$\theta$-series. The details can be found in \cite{MR2977354}. 
Section \ref{Lattices} does not contain new results but
formulates the van der 
Geer-Hirzebruch result for non linear codes.
This will be important
when it comes to partition functions of  vertex operator algebra
representations. Section \ref{AWT} deals with the arithmetic Whitehead
tower. It is shown how codes can be used to construct the interesting
representations of the Clifford groups. Afterwards, string extensions are
studied in the complex case and the role of lattices $L$ is
demonstrated.
In Section \ref{LVOA} we proceed to give a
model of the loop space extension of the torus $BL$ in terms of vertex
operator algebras and 
specify its representations. The main Theorem \ref{Main} gives a
one-to-one correspondence between  the representation ring  and the ring of
Hilbert modular forms. Finally, Section
\ref{section tmf} describes the relation of the theorem to topological
modular forms with level structures. This last section surveys the ideas to describe the
cocycles by 
sheaves of vertex algebra modules. Corollary \ref{Main3} is the
promised analogue of the Atiyah-Bott-Shapiro theorem. 

\subsection*{Acknowledgments} Throughout this project, the first author was supported by the
DFG Priority Programme 1786 and ARC Discovery Grant DP160104912. The second author
likes to thank the Max Planck Institute in Bonn for its
hospitality. He also benefited from a workshop at the Perimeter
Institute. The authors also like to thank G.\ H\"ohn, M.\ Kreck, A.\
Libgober,  A.\ Linshaw, L.\ Meier, V.\ Ozornova,  N.\ Scheithauer, B.\
Schuster for helpful conversations.

\section{Codes and Lattices}
\label{sec:codes}
Let $F$ be a field. 
 A {\em code} of length $n$ over $F$ is a subset of $F^n$. Let $C$ and
 $C'$ be codes over $F$  with the same length $n$. A (strict) morphism
 from $C$ to $C'$  is  
 a {\em monomial transformation}  $g$ such that 
  $g(C)\subset C'$. This means that $g$ is a linear  endomorphism of $F^n$ which in terms of the standard basis takes the form 
 $$g: e_i\mapsto c_i e_{\sigma^{-1}(i)}$$ 
 for some $c_i\in F^\times $ and some $\sigma \in \Sigma_n$.
 In particular, a strict automorphism  of $C$ is 
 an element in
 $$(F^\times )^n \rtimes \Sigma_n.$$
A code  is called {\em linear} if it is a linear subspace.
A linear code $C$ is {\em self-orthogonal} if $C\subseteq C^\perp$,
and {\em self-dual} if $C=C^\perp$ where orthogonality is with respect
to the standard inner product.
\subsection{The (8,4)-Hamming Code}\label{sec:Hamming}
The (8,4)-Hamming code $\mathcal H_8$ is a doubly even self-dual simply error
correcting binary code, and it is the smallest code with these properties.
As we shall see in Section \ref{sec:Spinors} below, these features are responsible for a particularly nice structure of
the real Clifford algebra $Cl_8$, which is at the heart of real Bott
periodicity, the $\widehat A$-genus and the triality
symmetries of the group $Spin(8)$. To recall the construction of
$\mH_8$, we introduce the Fano Plane
\[\mF\,=\, \bbP^2(\bbF_2),\]
viewed as a finite geometry, with lines given by the
2-dimensional subspaces of $\bbF_2^3$.
Its power set $\mP(\mF)$ endowed with the symmetric difference
operation is a seven dimensional $\bbF_2$-vector space. 
Inside it, we have the three dimensional linear subspace $\mC$ of line
complements.
For any vector $S\in\mP(\mF)$, the set-theoretic complement is
\[
  \mF\setminus S \, = \, \mF \, + \, S. 
\]
The {\em (7,4)-Hamming code} is the subspace
\[
  \mH_7\,=\,\mC\oplus\langle\mF\rangle
\]
of $\mP(\mF)$.
%
More precisely, any choice of numbering of the elements of the Fano plane defines an
identification $\bbF_2^7\cong\mP(\mF)$ with different choices giving
strictly isomorphic codes. We give two such numberings in Figure
\ref{fig:two_planes}.
Independent of the numbering, the incidence vector of the full set
$\mF$ is given by $\ul1:=(1,1,1,1,1,1,1)^T$.
The parity check isomorphism
\begin{eqnarray*}
  \bbF_2^7 & \xrightarrow{\,\,\,\cong\,\,\,} & (\bbF_2^8)^{ev}\\
  v & \,\,\longmapsto\,\, & (|v|,v) 
\end{eqnarray*}
transports $\mH_7$ into a doubly even code $\mH_8$ of length
eight. This is the (8,4)-Hamming Code.
The Fano plane is the smallest interesting example of a finite
projective geometry, and as such it is dually isomorphic to itself, meaning
that there is a bijection between points and lines preserving the
incidence relation. We can realize this duality in a manner that
decomposes $(\bbF_2^7)^{ev}$ as the sum of two (strictly isomorphic)
copies of $\mC$.
To do so, we number the points of $\mF$ in two different ways, as
depicted in Figure \ref{fig:two_planes}.
\begin{figure}
  \centering
\begin{tikzpicture}[scale=2.4]
  \tikzset{->-/.style={decoration={
  markings,
  mark=at position #1 with {\arrow{>}}},postaction={decorate}}}
  \node at (1,0.577) [anchor=center,circle, draw, fill=SeaGreen,minimum
  height=.6cm,name=a4] {$\mathbf 5$};
  \draw (a4) [line width=1mm, SeaGreen] circle (.577);
  \node at (1,1.732) [circle, draw, fill=SeaGreen, minimum height=.6cm,name=a7] {{$\mathbf 7$}};
  \node at (.5,0.866) [circle, draw, fill=SeaGreen,  minimum height=.6cm,name=a1] {$\mathbf 1$};
  \node at (1.5,0.866) [anchor=center,circle, draw, fill=SeaGreen, minimum height=.6cm,name=a2] {$\mathbf 2$};
  \node at (0,0) [circle, draw, anchor=center, fill=SeaGreen, minimum height=.6cm,name=a6] {$\mathbf 3$};
  \node at (2,0) [circle, draw, fill=SeaGreen, minimum height=.6cm,name=a5] {$\mathbf 6$};
  \node at (1,0) [circle, draw, fill=SeaGreen, minimum height=.6cm,name=a3] {$\mathbf 4$};

  \draw [SeaGreen, line width=1mm] (a1) -- node [midway] {} (a7);
  \draw [SeaGreen, line width=1mm] (a2) -- node [midway] {} (a4);
    \draw [SeaGreen, line width=1mm] (a4) -- node [midway] {} (a6);
  \draw [SeaGreen, line width=1mm] (a6) -- node [midway] {} (a1);
  \draw [SeaGreen, line width=1mm] (a5) -- node [midway] {} (a3);
  \draw [SeaGreen, line width=1mm] (a3) -- node [midway] {} (a6);
  \draw [SeaGreen, line width=1mm] (a1) -- node [midway] {} (a4);
  \draw [SeaGreen, line width=1mm] (a4) -- node [midway] {} (a5);
  \draw [SeaGreen, line width=1mm,] (a7) -- node [midway] {} (a2);
  \draw [SeaGreen, line width=1mm,] (a2) -- node [midway] {} (a5);
  \draw [SeaGreen, line width=1mm] (a3) -- node [midway] {} (a4);
  \draw [SeaGreen, line width=1mm] (a4) -- node [midway] {} (a7);
  \draw [SeaGreen, line width=1mm] (a1) -- node [midway] {} (a7);
  \draw [SeaGreen, line width=1mm] (a1) -- node [midway] {} (a7);
  \draw [SeaGreen, line width=1mm] (a1) -- node [midway] {} (a7);
  \draw [SeaGreen, line width=1mm] (a1) -- node [midway] {} (a7);

  \node at (2.5,0){};
\end{tikzpicture}
\begin{tikzpicture}[scale=2.4]
  \tikzset{->-/.style={decoration={
  markings,
  mark=at position #1 with {\arrow{>}}},postaction={decorate}}}
  \node at (1,0.577) [anchor=center,circle, draw, fill=Cornflower,minimum
  height=.6cm,name=a4] {$\mathbf 4$};
  \draw (a4) [line width=1mm, Cornflower] circle (.577);
  \node at (1,1.732) [circle, draw, fill=Cornflower, minimum
  height=.6cm,name=a7] {$\mathbf 7$};
  \node at (.5,0.866) [circle, draw, fill=Cornflower,  minimum height=.6cm,name=a1] {$\mathbf 1$};
  \node at (1.5,0.866) [anchor=center,circle, draw, fill=Cornflower, minimum height=.6cm,name=a2] {$\mathbf 2$};
  \node at (0,0) [circle, draw, anchor=center, fill=Cornflower, minimum height=.6cm,name=a6] {$\mathbf 6$};
  \node at (2,0) [circle, draw, fill=Cornflower, minimum height=.6cm,name=a5] {$\mathbf 5$};
  \node at (1,0) [circle, draw, fill=Cornflower, minimum height=.6cm,name=a3] {$\mathbf 3$};

  \draw [Cornflower, line width=1mm] (a1) -- node [midway] {} (a7);
  \draw [Cornflower, line width=1mm] (a2) -- node [midway] {} (a4);
    \draw [Cornflower, line width=1mm] (a4) -- node [midway] {} (a6);
  \draw [Cornflower, line width=1mm] (a6) -- node [midway] {} (a1);
  \draw [Cornflower, line width=1mm] (a5) -- node [midway] {} (a3);
  \draw [Cornflower, line width=1mm] (a3) -- node [midway] {} (a6);
  \draw [Cornflower, line width=1mm] (a1) -- node [midway] {} (a4);
  \draw [Cornflower, line width=1mm] (a4) -- node [midway] {} (a5);
  \draw [Cornflower, line width=1mm,] (a7) -- node [midway] {} (a2);
  \draw [Cornflower, line width=1mm,] (a2) -- node [midway] {} (a5);
  \draw [Cornflower, line width=1mm] (a3) -- node [midway] {} (a4);
  \draw [Cornflower, line width=1mm] (a4) -- node [midway] {} (a7);
  \draw [Cornflower, line width=1mm] (a1) -- node [midway] {} (a7);
  \draw [Cornflower, line width=1mm] (a1) -- node [midway] {} (a7);
  \draw [Cornflower, line width=1mm] (a1) -- node [midway] {} (a7);
  \draw [Cornflower, line width=1mm] (a1) -- node [midway] {} (a7);

\end{tikzpicture}
\caption{Two numberings of the Fano plane}
    \label{fig:two_planes}
\end{figure}
The dual isomorphism is then obtained by numbering the lines on either
side as in Figure \ref{fig:incidence}.
\begin{figure}[h]
  \centering
\begin{tikzpicture}[scale=1,every node/.style={minimum size=1.1cm-\pgflinewidth, outer sep=0pt}]
  \draw[very thick](-1.8,2.6)--(6.6,2.6)--(6.6,-5.8)--(-1.8,-5.8)--(-1.8,2.6);

  \draw[very thick](-1.8,1.4)--(6.6,1.4);
  \draw[very thick](-1.8,.2)--(6.6,.2);
  \draw[very thick](-1.8,-1)--(6.6,-1);
  \draw[very thick](-1.8,-2.2)--(6.6,-2.2);
  \draw[very thick](-1.8,-3.4)--(6.6,-3.4);
  \draw[very thick](-1.8,-4.6)--(6.6,-4.6);
    
  \draw[very thick](-.6,2.6)--(-.6,-5.8);
  \draw[very thick](.6,2.6)-- (.6,-5.8);
  \draw[very thick](1.8,2.6)--(1.8,-5.8);
  \draw[very thick](3,2.6)--(3,-5.8);
  \draw[very thick](4.2,2.6)--(4.2,-5.8);
  \draw[very thick](5.4,2.6)--(5.4,-5.8);

  \node at (-3,2) {Line 1};
  \node at (-3,.8) {Line 2};
  \node at (-3,-.4) {Line 3};
  \node at (-3,-1.6) {Line 4};
  \node at (-3,-2.8) {Line 5};
  \node at (-3,-4) {Line 6};
  \node at (-3,-5.2) {Line 7};
  \node at (-1.2,3) {1};
  \node at (0,3) {2};
  \node at (1.2,3) {3};
  \node at (2.4,3) {4};
  \node at (3.6,3) {5};
  \node at (4.8,3) {6};
  \node at (6,3) {7};

    \node[] at (-2,+2) {};
    \node[fill=Cornflower] at (0,2) {};
    \node[fill=SeaGreen] at (1.2,2) {};
    \node[fill=SeaGreen] at (2.4,2) {};
    \node[fill=Cornflower] at (3.6,+2) {};
    \node[fill=SeaGreen] at (4.8,+2) {};
    \node[fill=Cornflower] at (+6,+2) {};
    \node[fill=SeaGreen] at (-1.2,+0.8) {};
    \node[fill=Cornflower] at (+1.2,+.8) {};
    \node[fill=Cornflower] at (+2.4,+.8) {};
    \node[fill=SeaGreen] at (+3.6,+.8) {};
    \node[fill=SeaGreen] at (+4.8,+.8) {};
    \node[fill=Cornflower] at (+6,+.8) {};
    \node[fill=Cornflower] at (-1.2,-.4) {};
    \node[fill=SeaGreen] at (0,-.4) {};
    \node[fill=Cornflower] at (+2.4,-.4) {};
    \node[fill=Cornflower] at (+3.6,-.4) {};
    \node[fill=SeaGreen] at (+4.8,-.4) {};
    \node[fill=SeaGreen] at (+6,-.4) {};
    \node[fill=Cornflower] at (-1.2,-1.6) {};
    \node[fill=SeaGreen] at (0,-1.6) {};
    \node[fill=SeaGreen] at (+1.2,-1.6) {};
    \node[fill=SeaGreen] at (+3.6,-1.6) {};
    \node[fill=Cornflower] at (+4.8,-1.6) {};
    \node[fill=Cornflower] at (+6,-1.6) {};
        \node[fill=SeaGreen] at (-1.2,-2.8) {};
    \node[fill=Cornflower] at (0,-2.8) {};
    \node[fill=SeaGreen] at (+1.2,-2.8) {};
    \node[fill=Cornflower] at (+2.4,-2.8) {};
    \node[fill=Cornflower] at (+4.8,-2.8) {};
    \node[fill=SeaGreen] at (+6,-2.8) {};
    \node[fill=Cornflower] at (-1.2,-4) {};
    \node[fill=Cornflower] at (0,-4) {};
    \node[fill=Cornflower] at (+1.2,-4) {};
    \node[fill=SeaGreen] at (+2.4,-4) {};
    \node[fill=SeaGreen] at (+3.6,-4) {};
    \node[fill=SeaGreen] at (+6,-4) {};
    \node[fill=SeaGreen] at (-1.2,-5.2) {};
    \node[fill=SeaGreen] at (0,-5.2) {};
    \node[fill=Cornflower] at (+1.2,-5.2) {};
    \node[fill=SeaGreen] at (+2.4,-5.2) {};
    \node[fill=Cornflower] at (+3.6,-5.2) {};
    \node[fill=Cornflower] at (+4.8,-5.2) {};
\end{tikzpicture}
\caption{Incidence table relating points and lines in
  Figure \ref{fig:two_planes}. The dual isomorphism is reflected in
  the diagonal symmetry.} 
\label{fig:incidence}
\end{figure}
Writing
$c_i$ for the complement of Line $i$ in the right-hand
picture and $b_i$ for the complement of Line $i$ in the left-hand
picture, we have
\[
  c_i \,\,+ \,\, b_i  \,=\, \{i\}\,\,+\,\, \ul1.
\]
As a result, we obtain the decompositions
\[
  \left(\bbF_2^7\right)^{ev} \,=\, \mB\,\oplus\, \mC
\]
and
\[
  \left(\bbF_2^8\right)^{ev}\cong \bbF_2^7 \,=\,\mB\,\oplus\,\langle\ul1\rangle\oplus
  \mC,
\]
where $\mB$ and $\mC$ are the subspaces of line complements consisting
of the $b_i$ (respectively $c_i$) and the zero vector.
A further feature of point-line duality is that lines on the right govern addition in
$\mB$ and vice versa:
three distinct numbers $i$, $j$ and $k$ form a line in
the right-hand picture, if and only if
\[
  b_i\,+\,b_j\,+\,b_k\,=\,0.
\]
Similarly, 
\[
  c_i\,+\,c_j\,+\,c_k\,=\,0
\]
in $\mC$ if and only if $i$, $j$ and $k$ form a line on the left.
This will become important in our explicit calculations of spinor representations.
\subsection{The Standard Ternary Code}\label{sec:LT}
 A code is called {\em ternary} if $F=\bbF_3$. 
 A famous ternary code is
 $$ {\mathcal T}=\{ (s,a,a+s,a+2s)|\, a,s,\in \bbF_3\}.$$
 It is a linear code of dimension 2 in $\bbF_3^4$ which is self-dual. Its automorphism group is $GL_2(\bbF_3)$ as one easily verifies. This group coincides
 with the automorphism group of the Lubin-Tate curve  
$$C: \; y^2+y=x^3$$ 
 over $\bbF_4=\{0,1,\omega, \overline{\omega}\}$. This curve plays an
 important role in the construction of the spectrum $tmf$. It will be reconsidered in Section \ref{section tmf}. 
\subsection{The ternary Golay Code}
 The ternary Golay code is a self-dual linear code $C$ in
 $\bbF_3^{12}$ with 729 words.   Its
 automorphism group is the Mathieu group $2.M_{12}$. This code seems to be related to   the 
 triality of spin groups as considered in \ref{section triality}, but the authors 
 have not yet been able to make this connection precise.
\section{Lattices over integers of number fields}\label{Lattices}
For details the reader is referred to Chapters 5.1 to 5.4 of
\cite{MR2977354}.
 Let $p$ be an odd prime.  Let $F$ be the number field  $\bbQ(\zeta)$
 for some primitive $p$th root of unity. This is a field extension
 over $\bbQ$  of degree $p-1$.
 For $r$ ranging from $1$ to $p-1$ there are
 embeddings $\sigma_r$  of $F$ in $\bbC$ given by 
 $\sigma_r(\zeta)=\zeta^r$.
 The {\em trace} of an element $\alpha\in F$ is defined by
 $$\mbox{Tr}(\alpha)=\sum_{r=1}^{p-1}\sigma_r(\alpha ).$$
 We write $\bbO$ for the ring of integers in $F$. The underlying
 additive group of $\mathbb O$ is free abelian of rank 
$p-1$ generated by $\zeta^0, \ldots, \zeta^{p-2}$. 
Let $\bbP$ be the principal ideal of $\bbO$ generated by $1-\zeta$.
Then $\mathbb P$ is the kernel of the map $\rho: \bbO \ra  \bbZ /p\bbZ$ given by 
\begin{eqnarray}
 (a_0+a_1\zeta +\cdots +a_{p-2}\zeta^{p-2}) \mapsto  (a_0+a_1 +\cdots +a_{p-2}).
\end{eqnarray}
The pairing
$$ \langle x, y\rangle= \mbox{Tr}\left(\frac{x\bar{y}}{p}\right)$$
defines a bilinear form on $\bbO$ with values in the rationals.  It makes $\bbP$ into an even lattice, that is, 
 $ \langle x,y\rangle \in \bbZ \mbox{ and } \langle x,x\rangle \in 2 \bbZ.$
It is isomorphic to the root lattice $A_{p-1}.$
\subsection{Lattices from Codes} 
More generally, let $\rho: \bbO^n \ra \bbF_p^n$ be the reduction map modulo the principal ideal $\bbP$ in each coordinate. For a code  $C$  set
$$\Gamma_C=\rho^{-1} (C)\subset \bbO^n .$$
Assume that  $C$ is a self-orthogonal linear code. Then  the symmetric bilinear form
\begin{eqnarray}\label{bil}
\langle x, y\rangle & = &
\sum_{i=1}^{n} \mbox{Tr} \left(
\frac{x_i \bar{y}_i}{p}
\right)
 .\end{eqnarray}
turns  $\Gamma_C$ to an even lattice of rank $n(p-1)$
with  discriminant  $p^{n-2m}$. The dual lattice 
$$\Gamma^\vee = \mbox{Hom}(\Gamma,\bbZ )=\{x\in  F^n|\; \langle x, y\rangle \in \bbZ\}$$
 satisfies
 \begin{eqnarray}\label{dual}
 \Gamma_C^\vee \cong \Gamma_{C^\perp}
 \end{eqnarray}
(see f.i.\ \cite[Proposition 5.2 and Lemme 5.5]{MR2977354} for details).
In particular, if $C$ is self dual, then $\Gamma_C$ is isomorphic to
its dual lattice. Self-dual lattices are known as {\em unimodular} lattices.
\begin{example}
The standard ternary code gives the root lattice $E_8$. The ternary
Golay code produces  an even unimodular lattice of rank 
24 which contains $12A_2$.
\end{example}
\subsection{Theta series}
Lattices come with theta series, and these are key to elliptic genera.
The usual theta series of the lattice $\Gamma_C$ is
$$ \vartheta_C(z)=\sum_{x\in \Gamma_C} e^{\pi i z \mbox{\small Tr} \left(
\frac{x \bar{x}}{p}
\right)}
=\sum_{x\in \Gamma_C} e^{2 \pi i z \mbox{\small Tr}_k \left(
\frac{x \bar{x}}{p}
\right)
}.$$ 
where $k=\bbQ(\zeta +\zeta^{-1})$ is the real subfield of $F$. There
is an refinement of $\vartheta_C$  which is a holomorphic function on
$r={(p-1)/2}$-copies of the upper half-plane $\bbH$: set 
$$\theta_C(z) = \sum_{x\in \Gamma_C} e^{2 \pi i \mbox{\small Tr}_k \left(z
\frac{x \bar{x}}{p}
\right)} \mbox{ with } \mbox{\small Tr}_k \left(z
\frac{x \bar{x}}{p}
\right)= \sum_{l=1}^{r} z_l \frac{\sigma_l(x\bar{x})}{p}.
$$
Set 
$ \theta_j=\theta_{\bbP +j}$ for  $j\in \bbF_p$.
These functions are symmetric Hilbert modular forms of weight $m=1$
with respect to the congruence group  $$\Gamma({\textgoth
  p})=\{A=\left(\begin{array}{cc}a&b\\c&d\end{array}\right)\in
Sl_2(\bbO)| \, A=
\left(\begin{array}{cc}1&0\\0&1\end{array}\right)\mbox{ mod
}{\textgoth p}\}  $$ where ${\textgoth p}=\bbP\cap k$. In detail, this
means they are holomorphic functions $f$ which transform as  
$$f(\sigma_1(A)z_1,\ldots,
\sigma_{r}(A)z_{r})=f(z)\prod_{l=1}^{r}(\sigma_l(c)z_l+\sigma_l(d))^m.$$  
The symmetry means that they are invariant under the action of the
Galois group $\mbox{Gal}(k,\bbQ ) $ which permutes the coordinates of
$\bbH^r$. 
It turns out that every function $\theta_C$ is a sum of monomials in
the $\theta_j$s and hence is a symmetric Hilbert modular form.  
In order to formulate this result more precisely, recall that 
the weight enumerator of a code $C$ is the polynomial
$$W_C(x_0,x_1, \ldots x_r)=\sum_{w\in C} x_0^{l_0(w)} x_1^{l_1(w)}\cdots x_r^{l_r(w)} $$
where $l_0(w)$ is the number of zeroes in $w$ and $l_j(w)$ is the
number of $\pm j$. The following result is the Theorem of Alpbach by
van der Geer-Hirzebruch. It is formulated there for linear
self-orthogonal codes in \cite{MR2977354}[Theorem 5.3]. We repeat the
proof below because the assumption is unnecessary. 
\begin{thm}\label{GeerHirz}
Let $C$ be arbitrary (not necessary linear). Then
$$\theta_C=W_C(\theta_0,\theta_1, \ldots, \theta_r)$$
In particular, each code of length $n$ gives a symmetric $\Gamma({\textgoth p})$-Hilbert modular form of weight $n$.
\end{thm}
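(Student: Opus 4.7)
The plan is to prove the identity by a direct expansion of the theta series, exploiting the fact that the bilinear form in \eqref{bil} is a sum over coordinates and that $\Gamma_C = \rho^{-1}(C)$ decomposes canonically over $C$. Linearity of $C$ will never enter, which is precisely the strengthening claimed.

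First I would observe the factorisation
\[
  e^{2\pi i\,\mbox{\small Tr}_k\left(z\,\frac{x\bar x}{p}\right)}
  \,=\,\prod_{i=1}^{n} e^{2\pi i\,\mbox{\small Tr}_k\left(z\,\frac{x_i\bar x_i}{p}\right)}
\]
for $x=(x_1,\dots,x_n)\in\bbO^n$, which comes from $x\bar x=\sum_i x_i\bar x_i$ and the $\bbZ$-linearity of each $\mbox{Tr}_k(z_l\cdot)$. Next I would partition
\[
  \Gamma_C \,=\,\bigsqcup_{w\in C}\,\prod_{i=1}^{n}\bigl(\bbP+w_i\bigr),
\]
which only requires $C\subset\bbF_p^n$ to be a subset, not a subspace. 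Substituting this decomposition into the definition of $\theta_C$ and interchanging the (absolutely convergent) sum and product yields
\[
  \theta_C(z)\,=\,\sum_{w\in C}\,\prod_{i=1}^{n}\theta_{w_i}(z),
\]
where $\theta_j=\theta_{\bbP+j}$ as in the statement.

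The next step is the symmetry $\theta_{-j}=\theta_j$ for every $j\in\bbF_p$. This is because the map $x\mapsto -x$ is a bijection $\bbP+j\,\xrightarrow{\,\cong\,}\,\bbP-j$ and $(-x)\overline{(-x)}=x\bar x$, so the summands defining the two theta series agree termwise. Consequently the product $\prod_i\theta_{w_i}(z)$ depends only on the multiset of absolute values $\{|w_1|,\dots,|w_n|\}\subset\{0,1,\dots,r\}$, and regrouping gives
\[
  \prod_{i=1}^{n}\theta_{w_i}(z)\,=\,\theta_0(z)^{l_0(w)}\theta_1(z)^{l_1(w)}\cdots\theta_r(z)^{l_r(w)}.
\]
Summing over $w\in C$ produces precisely $W_C(\theta_0,\dots,\theta_r)$, which proves the theorem. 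The assertion that the result is a symmetric $\Gamma(\mathfrak p)$-Hilbert modular form of weight $n$ is then inherited coordinatewise from the weight-$1$ transformation behaviour of the $\theta_j$ already stated before the theorem.

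There is no real obstacle: the point of the proof is to notice that the conventional argument (say by Poisson summation on a self-orthogonal lattice) was more than one needs. The only delicate issue is book-keeping in the definition of the weight enumerator, where the classes of $\pm j$ in $\bbF_p$ must be collapsed; this is exactly what the symmetry $\theta_j=\theta_{-j}$ takes care of, and it holds for every coset, independent of any linear structure on $C$. Hence the hypothesis of linearity in \cite{MR2977354}[Theorem 5.3] can be dropped, as claimed.
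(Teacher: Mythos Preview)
Your proof is correct and follows essentially the same approach as the paper's own proof: decompose $\Gamma_C$ as a disjoint union over code words, factor the exponential coordinatewise to express each summand as $\prod_i\theta_{w_i}$, invoke the symmetry $\theta_j=\theta_{-j}$ (via $x\mapsto -x$) to collapse to the weight enumerator, and sum over $w\in C$. The paper's version is terser but structurally identical.
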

\begin{proof}  For $c\in C$ calculate
\begin{eqnarray*}
\sum_{x\in \rho^{-1}(c)}e^{2\pi i  \mbox{\small Tr}_k \left(z
\frac{x_i \bar{x}_i}{p}
\right)}
&=&\!\!\!\!\!\!\! 
\sum_{x_1\in \rho^{-1}(c_1)}e^{2\pi i   \mbox{\small Tr}_k \left(z
\frac{x_1 \bar{x}_1}{p}
\right)}\ldots \sum_{x_n\in \rho^{-1}(c_n )}e^{2\pi i   \mbox{\small Tr}_k \left(z
\frac{x_n \bar{x}_n}{p}
\right)} \\[+3ex]
&=& \theta^{l_0(c)}_0\theta_1^{l_1(c)}\cdots \theta_{r}^{l_r(c)}(z)
\end{eqnarray*}
since 
$$\theta_j(z)=\sum_{x\in \bbP+j}e^{2\pi i   \mbox{\small Tr}_k \left(z
\frac{x \bar{x}}{p}\right)}=\sum_{x\in \bbP-j}e^{2\pi i   \mbox{\small Tr}_k \left(z
\frac{x \bar{x}}{p}\right)}.
$$
The claim follows after summing over all code words.
\end{proof}
\begin{Cor}\label{Sl2}
 The usual theta series $\vartheta_C$ of $C$ is a modular form for $\Gamma(p)$ of weight $nr$. Self dual codes give $Sl_2(\bbZ)$-invariant modular forms.
\end{Cor}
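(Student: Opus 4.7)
My plan is to derive both statements by restricting the Hilbert modular form $\theta_C$ of Theorem \ref{GeerHirz} to the diagonal, and then for the self-dual case to invoke the standard theory of theta functions of even unimodular lattices. The key starting observation is that setting $z_1=\cdots=z_r=z$ collapses $\sum_{l=1}^r z_l\sigma_l(x\bar x)/p$ to $z\,\mbox{Tr}_k(x\bar x/p)$, so that
$$\vartheta_C(z)\,=\,\theta_C(z,z,\ldots,z),$$
and all required transformation properties of $\vartheta_C$ will descend from those of $\theta_C$.

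For the $\Gamma(p)$-modularity, I would check that the diagonal embedding $Sl_2(\bbZ)\hookrightarrow Sl_2(\bbO)$ sends $\Gamma(p)$ into $\Gamma({\textgoth p})$. This reduces to verifying $p\bbZ\subset{\textgoth p}$, a small piece of congruence bookkeeping: since $p\bbO=\bbP^{p-1}$, we have $p\bbZ\subset p\bbO\cap k={\textgoth p}^r\subset{\textgoth p}$. For any $A=\left(\begin{array}{cc}a&b\\c&d\end{array}\right)\in\Gamma(p)$, all Galois embeddings act trivially, $\sigma_l(A)=A$, so restricting the transformation law of Theorem \ref{GeerHirz} to the diagonal gives
$$\vartheta_C(Az)\,=\,\theta_C(Az,\ldots,Az)\,=\,\vartheta_C(z)\prod_{l=1}^r(cz+d)^n\,=\,(cz+d)^{nr}\,\vartheta_C(z).$$
Holomorphicity on $\bbH$ and at the cusps is inherited from $\theta_C$, which by Theorem \ref{GeerHirz} is a polynomial in the $\theta_j$, so $\vartheta_C$ is a modular form of weight $nr$ for $\Gamma(p)$.

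For the self-dual case, \eqref{dual} gives $\Gamma_C\cong\Gamma_{C^\perp}=\Gamma_C^\vee$, exhibiting $\Gamma_C$ as an even unimodular $\bbZ$-lattice of rank $n(p-1)=2nr$. The classical theorem on theta functions of even unimodular lattices---$T$-invariance from evenness of the quadratic form, and the $S$-transformation $z\mapsto-1/z$ via Poisson summation---then automatically promotes $\vartheta_C$ to a modular form of weight $nr$ for the full group $Sl_2(\bbZ)$, consistent with the first half. The only genuinely delicate step in all of this is the $p$--$\bbP$--${\textgoth p}$ congruence check in the first part; the rest is routine given Theorem \ref{GeerHirz} and the standard theory of lattice theta series.
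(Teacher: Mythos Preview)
Your argument is correct and follows the same route as the paper's proof: restrict $\theta_C$ to the diagonal $\bbH\hookrightarrow\bbH^r$ for the first claim, and for the second use \eqref{dual} together with the classical fact that theta series of even unimodular lattices are $SL_2(\bbZ)$-modular. You have simply fleshed out the congruence check $\Gamma(p)\subset\Gamma({\textgoth p})$ and the observation $\sigma_l(A)=A$ for integral $A$, which the paper leaves implicit.
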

\begin{proof}
The first statement follows from the theorem after taking the diagonal map from $\bbH$ to  $\bbH^r$. The second statement uses (\ref{dual}) and the well known fact that $\theta$-series of self dual lattices are invariant under the full modular group. It can also be obtained from the theorem by analyzing the action of $Sl_2(\bbF_p)$ (see \cite[Section 5]{MR2977354} for $p=3,5$).
\end{proof}
\begin{Rem}\label{DN}
Doi-Naganuma have constructed  a map from the group of modular forms for the congruence group $\Gamma_0(p)$ to Hilbert modular forms for $\Gamma({\textgoth p})$ which preserves the weight and Hecke eigenforms. It would be interesting to analyze those symmetric Hilbert modular forms which are in the image of the DN lift in terms of code words.

\end{Rem}
\section{Arithmetic Whitehead towers}\label{AWT}
The organizing principle for the different cohomology theories
considered in this paper comes from the Whitehead towers of the
orthogonal and the unitary groups. These are the towers
$$O(n) \longleftarrow SO(n)  \xleftarrow{\,\,\bbZ/2\,\,}
Spin(n)  \xleftarrow{\,\,PU(\Hi )\,\,}  String(n)
\longleftarrow \ldots$$
and 
$$U(n) \longleftarrow SU(n)  
\xleftarrow{\,\,PU(\Hi )\,\,}  String(n)
\longleftarrow \ldots$$
defined for large $n$. The step to the right is obtained by killing off the
lowest homotopy group. For instance, $SO(n)$ is a connected
component of $O(n) $ and $Spin(n) $ is its simply connected
cover, which turns out to be 2-connected. For $n\geq 3$ the smallest non-trivial
homotopy group of $Spin(n)$ is
\[
  \pi_3(Spin(n)) \cong
  \begin{cases}
    \mathbb Z \cong \langle Spin(3)\rangle & \text{ for $n\neq4$}\\
    \mathbb Z\oplus\mathbb Z & \text{ for $n=4$},
  \end{cases}
\]
\cite[Chapter VI, Theorem 4.17]{MimuraToda91}.

The next step in the tower, $String(n)$, is
classified by a generator of the integral cohomology
$H^4(BSpin(n))$. To be specific, we recall the low degree
cohomology groups:
\begin{enumerate}
\item
Let  $n\geq3$.  Then $H^k(BSpin(n))$ vanishes for
$k\leq3$ and is cyclic of infinite order for $k=4$ except for $n=4$
where it is two copies of the integers.  
\item For $n\geq 4$, one generator is $p_1/2$ where $p_1$ is the first
  Pontryagin class of the canonical bundle associated to the adjoint
  representation.  
\item For $n=3$,  the Euler class of the spinor representation
  generates. It coincides with $p_1/4$.  
\end{enumerate}
An explicit model realizing $String(n)$ as topological group
can be found in \cite[Section 5]{MR2079378}. This method can be
continued, but for the purposes of the paper at hand, we are only
interested in the first steps of the tower.
For $n\geq 4$, $String(n)$ fibers over $Spin(n)$ with
fibre $PU(\mbox{H})=K(\bbZ,2)$, the projective unitary group of an infinite
dimensional separable Hilbert space. There is also a realization of
$String_n(\bbR)$ as compact 2-group with fibre $\mathbb B U(1)$, whose
geometric realization gives the topological group, see
for instance \cite{Schommer-Pries11}.
\subsection{The Clifford groups and codes}
We revisit the $(8,4)$-Hamming Code in the context of Whitehead tower.
If we make the identifications
$$\bbF_2^8\,\cong\, O(1)^8\,\cong\, O(8)\cap \on{Diag}$$
then the Fano line complements can be realized as the the three-fold 
tensor products of the diagonal Pauli matrices
\[
  \sigma_0 = \bmat{1&0\\0&1}\quad\quad\text{  and  }\quad\quad   \sigma_3 = \bmat{1&0\\0&-1}.
\]
%
More precisely, in the second picture in Figure \ref{fig:two_planes}, 
the point $[a\negmedspace:\negmedspace b\negmedspace:\negmedspace c]\in \mF$ has
number $4a+2b+c$. The resulting isomorphism $$\mP(\mF)\cong\bbF_2^7\cong(\bbF_2^8)^{ev}$$
 identifies the linear map 
\begin{eqnarray*}
  C\negmedspace : \bbF_2^3 &\,\longrightarrow\, & \mP(\mF)\\
  p & \longmapsto & \mF\setminus p^\perp
\end{eqnarray*}
with the map
\begin{eqnarray*}
   \bbF_2^3 &\,\longrightarrow\, & O(1)^8\\
    (a,b,c) & \longmapsto & \sigma_3^a\tensor \sigma_3^b\tensor \sigma_3^c.
\end{eqnarray*}
If $p$ is a point in $\mathcal F$ (a line in $\mathbb F_2^3$),
then the orthogonal complement $p^\perp$ is a line in $\mF$ (a plane
in $\mathbb F_2^3$), and $\mF\setminus p^\perp$ is a line complement.
So, the image of
$C$ is the subspace of line complements.
The vector $\ul1$ corresponds to $-\id$, so the the full Hamming
code $\mH_8$ is identified with the subgroup
\[
  \mH = \left\{ \pm \sigma_3^a\tensor \sigma_3^b\tensor \sigma_3^c\mid
  a,b,c\in \{0,1\}\right\}.
\]
The various properties making $\mH_8$ suitable for coding theory are,
to a topologists, statements about the Whitehead tower.
The pre-image of $O(1)^n$ inside $Pin(n)$ is the
extraspecial 2-group $F_n$ of Clifford words. Writing $-1\in F_n$ for the
central element, this group consists of words in the symbols
$e_0,\dots e_{n-1}$ subject to the relations
\[
  e_i^2 = -1\quad\text{and}\quad e_ie_j=-e_je_i.
\]
The code $\mH_8$ is even, meaning \[\mH\subset SO(8).\]
Moreover, $\mH_8$ is self dual, so its spin-cover
  \[\widetilde \mH\subset  Spin(8)\] is abelian.
The fact that $\mH_8$ is doubly even makes $\widetilde\mH$ elementary
abelian, 
\[
  \widetilde \mH \cong \langle\pm 1\rangle\times \mH,
\]
meaning that $\mH$ lifts to a subgroup of $Spin(8)$.
Indeed, $\widetilde \mH$ is a maximal abelian subgroup of the
group $F_8^{ev}$ of even Clifford words. 
The fact that $\mH_8$ is simply error correcting means that
the Clifford symbols $e_0,\dots e_7$ form a system of coset
representatives for $F_8^{ev}/\widetilde \mH$. The orthogonal decomposition
\[
  \bbF_8^{ev}\,=\, \mB\oplus \mH_8
\]
discussed in Section \ref{sec:Hamming} allows us to write $F_8^{ev}$ as
semi-direct product of two elementary abelian groups,
\[
  F_8^{ev} \,\cong \, \mB \ltimes \widetilde\mH.
\]
Here $\mB$ acts on $\widetilde \mH$ by
\[
  b\negmedspace : \widetilde h \longmapsto (-1)^{\langle b,h\rangle}
  \,\widetilde h,
\]
where $\widetilde h\in \widetilde\mH$ is a lift of $h\in \mH$.
\subsection{Spinors}\label{sec:Spinors}
The twisted group algebra of $F_n$ is the Clifford algebra $Cl_n$ of $\bbR^n$ with its
standard quadratic form 
$$ Cl_n= \bbR[F_n]/(-1)_\bbR \sim (-1)_{F_n}.$$
In particular, Clifford modules coincide with
centre-faithful representations $\rho$ of $F_n$.
Because of the inclusion
\[
  Spin(n) \subset Cl_n^{ev} \cong \bbR^{tw}[F_n^{ev}]
\]
\cite{MR0167985}, it follows that representations of
$F_n^{ev}$ give rise to representations of the spin groups.
Let now $n=8$. The spinor representations $\Delta^+$ and $\Delta^-$
are the two irreducible modules over the Clifford algebra $Cl_8$. Each
has dimension $8$. 
Following Pressley and Segal \cite[Section 12.6]{PressleySegal86}, the
spinor representations are constructed by inducing up centre-faithful
representations of a maximal abelian subgroup. This is the point
where it becomes essential that our maximal abelian subgroup is, in
fact, elementary abelian, i.e., that $\mH_8$ is a Type II code. Let
$\chi^+$ and $\chi^-$ be the characters
\begin{eqnarray*}
  \chi^+\negmedspace : \langle \pm 1\rangle \times \mH_8 &
                                                           \,\longrightarrow
                                                           \, & O(1)\\
  -1 & \longmapsto & -1\\
  h& \longmapsto & 1
\end{eqnarray*}
and
\begin{eqnarray*}
  \chi^-\negmedspace : \langle \pm 1\rangle \times \mH_8 &
                                                           \,\longrightarrow
                                                           \, & O(1)\\
  -1 & \longmapsto & -1\\
  h& \longmapsto & h_0.
\end{eqnarray*}
Then the eight dimensional spinor representations $\Delta^{\pm}$ are
given as 
\[\Delta^\pm=\ind_{\widetilde \mH}^{F_8^{ev}}\,\chi^\pm.\]
Due to the semi-direct product decomposition
$F_8^{ev}=\mB\ltimes\widetilde\mH$, elements of $\mB$ act as
permutation matrices of order two, which turn out to be three-fold
tensor products of the $2\times 2$ permutation matrices
\[
  \sigma_0 = \bmat{1&0\\0&1}\quad\text{and}\quad   \sigma_1 = \bmat{0&1\\1&0}.
\]
More precisely, $\mB$ acts by addition on itself, which is governed by
$\mC$ and given by the doubly even permutations
\[
\begin{array}{ccc}
  b_1&\, \longmapsto\,& (01)\,(23)\,(45)\,(67) = \sigma_0\tensor\sigma_0\tensor\sigma_1, \\[+.6em]
  b_2&\, \longmapsto\,& (02)\,(13)\,(46)\,(57) = \sigma_0\tensor\sigma_1\tensor\sigma_0,\\[+.6em] 
  b_3&\, \longmapsto\,& (03)\,(12)\,(47)\,(56) = \sigma_0\tensor\sigma_1\tensor\sigma_1, \\[+.6em]
  b_4&\, \longmapsto\,& (04)\,(15)\,(26)\,(37) = \sigma_1\tensor\sigma_0\tensor\sigma_0, \\[+.6em]
  b_5&\, \longmapsto\,& (05)\,(14)\,(27)\,(36) = \sigma_1\tensor\sigma_0\tensor\sigma_1, \\[+.6em]
  b_6&\, \longmapsto\,& (06)\,(17)\,(24)\,(35) = \sigma_1\tensor\sigma_1\tensor\sigma_0, \\[+.6em]
  b_7&\, \longmapsto\,& (07)\,(16)\,(25)\,(34) = \sigma_1\tensor\sigma_1\tensor\sigma_1. 
\end{array}
\]
The subgroup $\mH$ is fixed by $\Delta^+$ and acted upon by
$\chi^-\tensor\id$ for $\Delta^-$.
\subsection{Triality}\label{section triality}
The relationship between spinor groups and codes was already explored
in the work of Wood \cite{MR1022693}, who also writes about the triality symmetry.
Together with the fundamental representation
\[
  \pi: Spin(8)\longrightarrow SO(8),
\]
the spinor representations $\Delta^\pm$ form the eight dimensional
irreducible representations of $Spin(8)$. 
All three are double covers, their respective kernels generated by the three 
non-trivial center elements:
\[
  \ker(\Delta^+)\,=\,\langle\omega\rangle,\quad\quad
  \ker(\Delta^-)\,=\,\langle-\omega\rangle,\quad\quad
  \ker(\pi)\,=\,\langle-1\rangle,
\]
where $\omega=e_0\cdots e_7$ is the second generator of the center of
$Spin(8)$. The spin covers of the spinor representations $\Delta^\pm$
are the extraordinary outer automorphisms of $Spin(8)$
depicted in Figure \ref{fig:triality}.
\begin{figure}[h]
  \centering
  \[
    \begin{tikzpicture}[scale=1.5]
      \node at (0,0) [name=so] {$SO(8)$};
      \node at (-1,1) [name=d+] {$Spin(8)$};
      \node at (1,1) [name=d-] {$Spin(8)$};
      \node at (0,-1) [name=pi] {$Spin(8)$};
      \draw[->,thick] (d+) -- node [midway, left] {$\Delta^+\,\,$} (so);
      \draw[->,thick] (d-) -- node [midway, right] {$\Delta^-$} (so);
      \draw[->,thick] (pi) -- node [midway, right] {$\pi$} (so);
      \draw[->,thick,dashed,blue!80!green] (d+) to [bend right=60] node
      [midway,left] {$\textcolor{red}{(-1,\omega)}\quad\quad\boldsymbol{\tau^+}$} (pi);
      \draw[->,thick,dashed,blue!80!green] (d-) to [bend left=60] node
                                 [midway,right]
                                {$\boldsymbol{\tau^-}\quad\quad \textcolor{red}{(-1,\omega,-\omega)}$} 
      (pi);
      \draw[<->,thick,dashed,dashed,blue!80!green] (d+) to [bend left=50] node
      [midway,above,text width=1.5cm, align=center]
      {
        $\textcolor{red}{(\omega,-\omega)}$\\[+1em] $\boldsymbol\sigma$} (d-);      
    \end{tikzpicture}
  \]
  \caption{Triality symmetry of the Dynkin diagram $D_4$. The
    extraordinary automorphisms $\tau^\pm$ are the lifts of the spinor
  representations, while $\sigma$ is conjugation with $e_0$. The
  action of the symmetric group $S_3$ on the center
  is indicated in red.}
  \label{fig:triality}
\end{figure}

%
\subsection{Bott periodicity}
The same objects that define triality are also responsible for real
Bott periodicity. 
The Bott element is given by the full spinor representation 
\[
  \Delta = \ind_{\widetilde \mH}^{F_8}\,\chi^+ = \Delta^+\oplus e_0\Delta^+, 
\]
combining $\Delta^+$ and $\Delta^-$ in the sense that
\[\on{res}_{F_8^{ev}}\Delta \cong \Delta^+\oplus\Delta^-.\]
The Pauli matrix point of view has the advantage that it gives an
explicit description of the Bott element, which is more direct than
the approach in the standard literature \cite{MR0167985}
\cite{LawsonMichelsohn89}.
One checks directly that the matrices
\[ E_1= {\small \left[\textcolor{gray!50}{\begin{array}{rr|rr|rr|rr}
 & \textcolor{black}{-1} &  &  &  &  &  &  \\
\textcolor{black}{1} &  &  &  &  &  &  &  \\
\hline
  &  &  & \textcolor{black}{-1} &  &  &  &  \\
 &  & \textcolor{black}{1} &  &  &  &  &  \\
\hline
  &  &  &  &  & \textcolor{black}{1} &  &  \\
 &  &  &  & \textcolor{black}{-1} &  &  &  \\
\hline
  &  &  &  &  &  &  & \textcolor{black}{1} \\
 &  &  &  &  &  & \textcolor{black}{-1} & 
\end{array}}\right]
\quad
E_2 = \left[\textcolor{gray!50}{\begin{array}{rr|rr|rr|rr}
 &  & \textcolor{black}{-1} &  &  &  &  &  \\
 &  &  & \textcolor{black}{1} &  &  &  &  \\
\hline
 \textcolor{black}{1} &  &  &  &  &  &  &  \\
 & \textcolor{black}{-1} &  &  &  &  &  &  \\
\hline
  &  &  &  &  &  & \textcolor{black}{-1} &  \\
 &  &  &  &  &  &  & \textcolor{black}{1} \\
\hline
  &  &  &  & \textcolor{black}{1} &  &  &  \\
 &  &  &  &  & \textcolor{black}{-1} &  & 
      \end{array}}\right]}
\]

\[
  E_3 ={\small \left[\textcolor{gray!50}{\begin{array}{rr|rr|rr|rr}
 &  &  & \textcolor{black}{-1} &  &  &  &  \\
 &  & \textcolor{black}{-1} &  &  &  &  &  \\
\hline
  & \textcolor{black}{1} &  &  &  &  &  &  \\
\textcolor{black}{1} &  &  &  &  &  &  &  \\
\hline
  &  &  &  &  &  &  & \textcolor{black}{-1} \\
 &  &  &  &  &  & \textcolor{black}{-1} &  \\
\hline
  &  &  &  &  & \textcolor{black}{1} &  &  \\
 &  &  &  & \textcolor{black}{1} &  &  & 
              \end{array}}\right]
            \quad
E_4 = \left[\textcolor{gray!50}{\begin{array}{rr|rr|rr|rr}
 &  &  &  & \textcolor{black}{-1} &  &  &  \\
 &  &  &  &  & \textcolor{black}{-1} &  &  \\
\hline
  &  &  &  &  &  & \textcolor{black}{1} &  \\
 &  &  &  &  &  &  & \textcolor{black}{1} \\
\hline
\textcolor{black}{1} &  &  &  &  &  &  &  \\
 & \textcolor{black}{1} &  &  &  &  &  &  \\
\hline
  &  & \textcolor{black}{-1} &  &  &  &  &  \\
 &  &  & \textcolor{black}{-1} &  &  &  & 
      \end{array}}\right]}
\]  

\[
  E_5 = {\small \left[\textcolor{gray!50}{\begin{array}{rr|rr|rr|rr}
     &  &  &  &  & \textcolor{black}{-1} &  &  \\
     &  &  &  & \textcolor{black}{1} &  &  &  \\
  \hline
     &  &  &  &  &  &  & \textcolor{black}{-1} \\
     &  &  &  &  &  & \textcolor{black}{1} &  \\
  \hline
     & \textcolor{black}{-1} &  &  &  &  &  &  \\
   \textcolor{black}{1} &  &  &  &  &  &  &  \\
  \hline
     &  &  & \textcolor{black}{-1} &  &  &  &  \\
     &  & \textcolor{black}{1} &  &  &  &  & 
             \end{array}}\right]
              \quad
E_6 =  \left[\textcolor{gray!50}{
\begin{array}{rr|rr|rr|rr}
 &  &  &  &  &  & \textcolor{black}{-1} &  \\
 &  &  &  &  &  &  & \textcolor{black}{-1} \\
\hline
  &  &  &  & \textcolor{black}{-1} &  &  &  \\
 &  &  &  &  & \textcolor{black}{-1} &  &  \\
\hline
  &  & \textcolor{black}{1} &  &  &  &  &  \\
 &  &  & \textcolor{black}{1} &  &  &  &  \\
\hline
 \textcolor{black}{1} &  &  &  &  &  &  &  \\
 & \textcolor{black}{1} &  &  &  &  &  & 
\end{array}}\right]}            
\]

\[
 E_7 ={\small \left[
\textcolor{gray!50}{  \begin{array}{rr|rr|rr|rr}
   &  &  &  &  &  &  & \textcolor{black}{-1} \\
   &  &  &  &  &  & \textcolor{black}{1} &  \\
  \hline
   &  &  &  &  & \textcolor{black}{1} &  &  \\
   &  &  &  & \textcolor{black}{-1} &  &  &  \\
  \hline
   &  &  & \textcolor{black}{1} &  &  &  &  \\
   &  & \textcolor{black}{-1} &  &  &  &  &  \\
  \hline
   & \textcolor{black}{-1} &  &  &  &  &  &  \\
  \textcolor{black}{1} &  &  &  &  &  &  & 
                         \end{array}}
                     \right]}
\]
satisfy the Clifford relations. Hence \[\Delta^{\pm}(e_0e_i) = \pm E_i\]
defines two modules over the even Clifford algebra, which are readily
checked to have the desired effect on centers. Bott periodicity is the
statement that the representation
\[
  \Delta: Cl_8  \longrightarrow  Mat_{16\times 16} 
\]
is an isomorphism. This is now an easy consequence of the fact that
the real Pauli matrices
\[
  \sigma_0, \quad \sigma_1,\quad \sigma_1\sigma_0,\,\,\,\,\text{and}\,\,\,\, \sigma_3.
\]
form a basis of the $2\times 2$-matrices, taking into account that
$\Delta$ sends the 256 positive Clifford words to 
the 256 possible 4-fold tensor products of the real Pauli matrices.
Indeed, the 64 three-fold tensor products are the possible products of the
matrices $E_1,\dots,E_7$ above, which are easily read off from the Pauli
matrix realizations of $\mH$ and $\mB$. Combined with
$\omega\mapsto \sigma_3\tensor\id\tensor\id\tensor\id$ and
$e_1\mapsto \sigma_1\tensor\id\tensor\id\tensor\id$, these determine
all of $\Delta$.

If we allow ourselves to work over the complex numbers, the
doubly even condition can be dropped, and an even self-dual code will
suffice, since unlike $O(1)$, the group $U(1)$ can accommodate elements
of order 4.
This is the reason why complex Bott periodicity already occurs at
$n=2$, where we have the first even self-dual code.
\subsection{The integral spin and string groups.}
There is a version of the Whitehead tower for finite
groups, where the successive killing of lower degree group cohomology
has the meaning of forming first the commutator subgroup then the
universal central extension (Schur extension), then 
the categorical Schur extension.
This process requires the commutator subgroup of the original group to
be perfect, so we will limit ourselves to considering only such groups.
For details we refer the reader to \cite{EpaGanter17}.
As before, a tower of topological groups can be obtained by taking classifying
spaces of the fibers, so that the categorification interpretation is
really a matter of taste.

We saw above how codes fit into the Whitehead tower for
$O(n)=O_n(\bbR)$. Since codes, however, are finite objects, a more
natural home is the arithmetic Whitehead tower for 
$O_n(\bbZ)$, the group of orthogonal matrices with integral
entries.
These group are known as the {\em hyper-octahedral groups}.
They are generated by permutation matrices and diagonal matrices 
with $\pm 1$. It is not hard to verify the
isomorphism 
\begin{equation}\label{rtimes}
O_n(\bbZ)\cong \Sigma_n\ltimes \bbF_2^n.
\end{equation}
The spinor representations $\Delta^\pm$ discussed above
map the group $F_8^{ev}$ to $SO_8(\bbZ)$, or rather its spin double
cover, which we will denote $Spin_8(\bbZ)$.\par 
In the stable range, the arithmetic Whitehead tower of the
symmetric groups 
is governed by the homotopy groups of spheres
\cite{EpaGanter17}, while that of $GL_n(\bbZ)$ is governed by the
$K$-theory of the integers, see Section \ref{sec:GL_n} below.
The inclusions
\[
  \Sigma_n\subset O_n(\bbZ) \subset GL_n(\bbZ).
\]
suggest to think of the Whitehead tower of the hyper-octahedral
groups as interpolating between these two.%
\begin{Thm}\label{thm:awt}
  For large enough $n$, the arithmetic Whitehead tower of the
  hyper-octahedral groups starts with the terms
  \[
    O_n(\bbZ) \longleftarrow A_n\ltimes (\bbF_n^{ev})\xleftarrow{\,\,(\bbZ/2)^2\,\,}
    \widetilde A_n\ltimes F_n^{ev} \longleftarrow \dots
  \]  
\end{Thm}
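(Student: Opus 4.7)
The plan is to compute the first two stages of the arithmetic Whitehead tower of $O_n(\bbZ)$ directly, using the identification (\ref{rtimes}). For the first stage I compute the commutator subgroup. Parity of permutation and parity of Hamming weight combine to give a surjective homomorphism
\[
\on{sgn}\times\epsilon\,\negmedspace\colon\Sigma_n\ltimes\bbF_2^n\,\longrightarrow\,(\bbZ/2)^2,\qquad (\sigma,v)\longmapsto(\on{sgn}\sigma,\,\epsilon(v)),
\]
which is well defined because permutations preserve parity. Its kernel is $A_n\ltimes(\bbF_2^n)^{ev}$. For $n\geq 5$ this kernel is perfect: the subgroup $A_n$ is simple non-abelian, while each generator $e_i+e_j$ of $(\bbF_2^n)^{ev}$ is the commutator $[(ij),e_j]$ inside the semidirect product. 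Hence $A_n\ltimes(\bbF_2^n)^{ev}$ is the commutator subgroup of $O_n(\bbZ)$, which gives the first arrow.

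For the second arrow I must identify the universal central (Schur) extension of $G:=A_n\ltimes V$, where $V:=(\bbF_2^n)^{ev}$. I would compute the Schur multiplier $H_2(G;\bbZ)$ via the Lyndon-Hochschild-Serre spectral sequence of the split extension $V\hookrightarrow G\twoheadrightarrow A_n$, whose $E_2$-page contributes to total degree $2$ through
\[
E_2^{2,0}=H_2(A_n;\bbZ),\qquad E_2^{1,1}=H_1(A_n;V),\qquad E_2^{0,2}=(\wedge^2_{\bbF_2}V)_{A_n}.
\]
For $n\geq 8$ we have $H_2(A_n;\bbZ)=\bbZ/2$, and since the extension splits the edge map $H_2(G)\twoheadrightarrow H_2(A_n)$ is split. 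The two remaining computations are $H_1(A_n;V)=0$ and $(\wedge^2V)_{A_n}=\bbZ/2$, which I would handle using the short exact sequence of permutation modules $0\to V\to\bbF_2[\{1,\dots,n\}]\to\bbF_2\to 0$, Shapiro's lemma applied to the point stabilizer $A_{n-1}$, and an inspection of $A_n$-orbits on decomposable wedges $e_i\wedge e_j$.

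Granting $H_2(G)=(\bbZ/2)^2$, I would exhibit $\widetilde A_n\ltimes F_n^{ev}$ as the universal central extension in two independent steps. The spin double cover $\widetilde A_n\twoheadrightarrow A_n$ realizes the summand $H_2(A_n)$, and the extraspecial 2-group $F_n^{ev}\twoheadrightarrow V$ realizes the unique nonzero $A_n$-invariant class in $H^2(V;\bbZ/2)$, namely the Clifford commutation cocycle. The $\Sigma_n$-action on $F_n^{ev}$ by permuting the generators $e_i$ fixes the central element $-1\in F_n^{ev}$, hence restricts to $A_n$ and lifts along $\widetilde A_n\twoheadrightarrow A_n$; this turns $\widetilde A_n\ltimes F_n^{ev}$ into a central extension of $G$ by $(\bbZ/2)^2$ whose classifying class hits each edge generator of $H^2(G;(\bbZ/2)^2)$, so it is the universal one.

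The main obstacle is the cohomological bookkeeping underlying the two equalities $H_1(A_n;V)=0$ and $(\wedge^2V)_{A_n}=\bbZ/2$. The vanishing of $H_1$ is delicate because the Shapiro reduction lands in $H_1(A_{n-1};\bbF_2)$, which vanishes only once $n$ is large enough for $A_{n-1}$ to be perfect; and the invariants computation must be sharp enough to confirm that no additional $\bbZ/2$ summand sneaks into the Schur multiplier. Once these two computations are pinned down, the identification of the Clifford extension $F_n^{ev}$ as the contribution to $(\wedge^2V)_{A_n}$ is essentially forced by $A_n$-equivariance.
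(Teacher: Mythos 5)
Your overall strategy coincides with the paper's: identify the commutator subgroup $A_n\ltimes(\bbF_2^n)^{ev}$, run the Lyndon--Hochschild--Serre spectral sequence for $(\bbF_2^n)^{ev}\to H\to A_n$ with exactly the same three total-degree-two entries, and then recognize $\widetilde A_n\ltimes F_n^{ev}$ as the Schur extension by a universality/non-reducibility argument. (Where you invoke Shapiro's lemma and an orbit computation, the paper proves an explicit crossed-homomorphism lemma for $H^1(A_n;E^*)=0$ and cites Romagny for the invariants of $\Lambda^2E^*$; these are interchangeable.) Two steps, however, need repair. First, your perfectness argument: the element $(ij)$ is a transposition and does not lie in $A_n\ltimes(\bbF_2^n)^{ev}$, so $[(ij),e_j]$ only shows that $e_i+e_j$ is a commutator in the full hyperoctahedral group -- enough to identify the commutator subgroup, but not that $H$ itself is perfect. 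Perfectness is essential (otherwise there is no universal central extension), and it needs commutators formed inside $H$, e.g. $[(ijk),e_k]=e_i+e_k$ together with perfectness of $A_n$ for $n\geq 5$.

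Second, ``granting $H_2(G)=(\bbZ/2)^2$'' is not justified by the $E_2$-page alone: the three entries give only the upper bound $|H_2(G)|\leq 4$, because the coinvariants $(\wedge^2 V)_{A_n}$ could a priori be killed by $d^2$ from $H_2(A_n;V)$ or $d^3$ from $H_3(A_n)$. The paper supplies the missing lower bound externally, by comparing with Stembridge's computation of the Schur multiplier of $\Sigma_n\ltimes\bbF_2^n$ (elementary abelian of order 8) via the extension $1\to H\to \Sigma_n\ltimes\bbF_2^n\to\{\pm1\}^2\to 1$. In your scheme the lower bound must instead come from your final step: you need the classifying homomorphism $H_2(G)\to(\bbZ/2)^2$ of $\widetilde A_n\ltimes F_n^{ev}$ to be surjective, and noting that the restrictions to $A_n$ and to $V$ are each nontrivial is not quite enough, since the image could then still be the diagonal copy of $\bbZ/2$. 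You also need that the restriction to $A_n$ is trivial in the Clifford coordinate (its preimage is $\widetilde A_n\times\langle -1\rangle$) and the restriction to $V$ is trivial in the spin coordinate (preimage $\bbZ/2\times F_n^{ev}$); together these force the image to contain both generators, whence surjectivity, and combined with the spectral-sequence upper bound this makes the extension universal. With these two repairs your argument closes, and it has the mild advantage of not needing Stembridge's multiplier computation, only his (or your direct) identification of the abelianization.
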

\begin{proof}
For $n\geq 2$, the abelisation of $O_n(\bbZ)$ is a Klein 4-group. This
is discussed, for instance, in \cite{Stembridge92}.
The index 2 subgroups of the hyper-octahedral group consist of
\begin{enumerate}
  \item the group $SO(\bbZ)$ of elements with even determinant,
  \item the group $A_n\ltimes\bbF_2^n$ of signed even permutations and
  \item the group $D_n=\Sigma_n\ltimes (\bbF_2^n)_{ev}$ of evenly
    signed permutations.
\end{enumerate}
The latter is a Weyl group of type $D$.  
The intersection of these three groups is the group
\[
  H = A_n\ltimes (\bbF_2^n)^{ev}
\]
of evenly signed even permutations. This is the commutator subgroup of
$O_n(\bbZ)$. We shall see below that it is perfect and hence possesses
a universal central extension. 
Write $E$ for the elementary 2-group $(\bbF_2^n)^{ev}$. Recall that
the universal coefficient theorem identifies the Pontryagin dual of
the integral homology with cohomology with circle coefficients
\[
  \widehat{H_i(G)} \cong H^i(G;U(1)),
\]
and we will switch between these two as convenient for calculating the
Schur multiplier of $H$.
Consider
the Lyndon-Hochschild-Serre spectral sequence
for the $U(1)$-cohomology of the extension
\[
  0\to E\longrightarrow H\to A_n\longrightarrow 0.
\]
We have 
\[
  E_2^{0,1} = (E^*)^{A_n} 
            =  0
\]
for $n\geq 3$, 
while
\[
  E_2^{0,2}((\Lambda^2E)_{A_n})^* = (\Lambda^2E^*)^{A_n}
\]
has two elements. The non-trivial element is the bilinear form
$$\beta = \sum_{1\leq i<j\leq n} x_i\wedge x_j,$$
where $(x_i)_i$ is the dual basis to $(e_i)_{i}$. Here we have used
the results in
\cite{Romagny05} to identify the invariants under the alternating group.
Note that $\beta$ is a cocycle for $F_n^{ev}$. This is checked on the
Clifford words of length 2, which generate $F_n^{ev}$.
On the other axis, we have $E^2_{1,0}=0$ for
$n\geq 5$, since the alternating groups become perfect in this
range. Further, for $n\geq 8$, the Schur multiplier of
the alternating group, $E_2^{2,0}$, has order 2. The non-trivial
element classifies the spin extension $\widetilde A_n$.
We now turn our attention to the entry $E_2^{1,1}$, which we claim to
be zero. Then the proof will follow from Corollary \ref{cor4.3} below.
\end{proof}
\begin{Lem}\label{lem:crossed_homomorphism}
  Let $f$ be a crossed homomorphism from $A_n$ to $E^*$ with respect to the
  action permuting the basis elements of $E$. Then $f$ is principal.
\end{Lem}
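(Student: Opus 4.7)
The lemma is equivalent to showing $H^1(A_n, E^*) = 0$. I would deduce this from the long exact sequence of the natural short exact sequence of $A_n$-modules, combined with Shapiro's lemma.

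First, the standard bilinear form on $\bbF_2^n$ identifies $E^\perp = \langle \underline{1}\rangle$, so we get
\[
0 \longrightarrow \langle \underline{1}\rangle \longrightarrow \bbF_2^n \longrightarrow E^* \longrightarrow 0,
\]
whose middle term is the transitive permutation module on the standard basis, that is, $\bbF_2^n = \on{Ind}_{A_{n-1}}^{A_n} \bbF_2$ with $A_{n-1}$ the stabilizer of $e_n$. Shapiro's lemma yields $H^i(A_n, \bbF_2^n) \cong H^i(A_{n-1}, \bbF_2)$, and Frobenius reciprocity identifies the map induced by $1 \mapsto \underline{1}$ with the ordinary restriction in cohomology. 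Since $A_{n-1}$ is perfect in our range, $H^1(A_{n-1}, \bbF_2) = 0$, and the long exact sequence collapses to
\[
0 \longrightarrow H^1(A_n, E^*) \longrightarrow H^2(A_n, \bbF_2) \xrightarrow{\;\on{Res}\;} H^2(A_{n-1}, \bbF_2).
\]

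The main step is now to argue that $\on{Res}$ is injective. Both source and target are cyclic of order two, generated by the (unique non-split) Schur double covers $\widetilde A_n$ and $\widetilde A_{n-1}$. Since $Spin(n-1) \subset Spin(n)$ pulls $\widetilde A_n$ back to $\widetilde A_{n-1}$, the restriction sends the non-trivial class to the non-trivial class, hence is injective. This forces $H^1(A_n, E^*) = 0$, so every crossed homomorphism is principal. The main obstacle in the argument is precisely this last step, and it reduces to verifying the compatibility of the Schur covers under restriction, which is immediate from their construction as subgroups of the spin groups.
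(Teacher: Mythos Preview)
Your argument is correct in the range $n\geq 6$ (where $A_{n-1}$ is perfect and both $H^2(A_n,\bbF_2)$ and $H^2(A_{n-1},\bbF_2)$ are of order two), which is ample for the application in Theorem~\ref{thm:awt}. It is, however, a genuinely different proof from the one in the paper. The paper argues directly and elementarily: given a crossed homomorphism $f$, it writes $E^*$ as polynomials in the $x_i$ modulo $x_1+\cdots+x_n$, examines the value of $f$ on the $3$-cycle $(123)$ and on the products $(12)(3k)$, and explicitly builds an element $p=\sum\alpha_i x_i$ whose principal crossed homomorphism agrees with $f$ on a generating set. Your route instead passes through the long exact sequence of $0\to\langle\ul1\rangle\to\bbF_2^n\to E^*\to 0$, invokes Shapiro to identify the middle cohomology with $H^*(A_{n-1},\bbF_2)$, and reduces everything to the injectivity of $\on{Res}\colon H^2(A_n,\bbF_2)\to H^2(A_{n-1},\bbF_2)$, which you read off from $Spin(n-1)\subset Spin(n)$. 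The paper's approach is self-contained, needs no outside facts about Schur multipliers or spin covers, and covers all $n\geq 3$; yours is more structural, ties the vanishing directly to the ambient spin picture already present in the paper, but imports the identification of $\widetilde A_n$ via $Spin(n)$ and loses the small cases $n\leq 5$. For the purposes of the spectral sequence computation either is perfectly adequate.
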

\begin{proof}
The alternating
group is generated by 3-cycles. 
So, if a crossed morphism is zero on all 3-cycles, it is zero.
Without loss of generality, we therefore assume that $f(123)\neq
0$.
We will continue to identify $E^*$ with the degree one
polynomials in the variables $x_i$ with $\bbF_2$-coefficients, modulo
the elementary symmetric polynomial $x_1+\dots+x_n$.
In this notation, $f(123)$ and $f(132)$ each have two summands, with
indices from the set $\{1,2,3\}$. Of course, there is a degree of
freedom here, we could add $x_1+\dots+x_n$ to obtain the other
representative; we choose the former representation.
To be precise, if $f(123)=x_i+x_{i+1}$ (indices modulo
3), then $f(132) = {}^{(132)}(x_i+x_{i+1}) = x_{i+1}+x_{i+2}$.
We will define a polynomial $p=\sum_{i=1}^n \alpha_ix_i$ such that $f$
agrees with the principal crossed homomorphism associated to $p$.
We set $\alpha_i=0=\alpha_{i+2}$ and $\alpha_{i+1}=1$.
If $n=3$, we are done. Otherwise, for any $k>3$, we
consider $f((12)(3k))$. This either contains both $x_3$ and $x_k$ as
summands, or neither. In the first case, we set $\alpha_k=\alpha_3+1$,
in the second case, we set $\alpha_k=\alpha_3$. This is
a generating set on which $f$ now agrees with the principal crossed
homomorphism associated to $\sum_{i=1}^n \alpha_ix_i$. 
\end{proof}
\begin{Cor}\label{cor4.3}
  For $n\geq 5$, the group $H$ is perfect.
  For $n\geq 8$, the Schur extension of $H$ is given by the
  semi-direct product 
  \[
    \widetilde A_n\ltimes F_n^{ev}.   
  \]
\end{Cor}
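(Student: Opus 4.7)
The plan is to extract the low-degree cohomology $H^\bullet(H; U(1))$ from the Lyndon-Hochschild-Serre spectral sequence
\[
  E_2^{p,q} \;=\; H^p(A_n;\, H^q(E; U(1))) \;\Longrightarrow\; H^{p+q}(H; U(1))
\]
already set up in the proof of Theorem \ref{thm:awt}, and then realize the surviving classes by an explicit central extension. For the perfectness assertion, note that for $n \geq 5$ the alternating group is perfect, so $E_2^{1,0} = H^1(A_n; U(1)) = 0$, while $E_2^{0,1} = (E^*)^{A_n} = 0$ already for $n \geq 3$. The two total-degree-one positions vanish, so $\on{Hom}(H^{ab}, U(1)) = H^1(H; U(1)) = 0$, and since $H^{ab}$ is a finite abelian group this forces $H^{ab} = 0$; that is, $H$ is perfect.

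For the Schur extension, assume $n \geq 8$. The three $E_2$-positions of total degree two are $E_2^{2,0} = H^2(A_n; U(1)) = \bbZ/2$ (the classical Schur multiplier of $A_n$), $E_2^{1,1} = H^1(A_n; E^*) = 0$ by Lemma \ref{lem:crossed_homomorphism}, and $E_2^{0,2} = (\Lambda^2 E^*)^{A_n} = \bbZ/2$ generated by $\beta$. This yields the a priori upper bound $|H^2(H; U(1))| \leq 4$. To realize equality I exhibit a central extension of $H$ by a group of order four. The permutation action of $A_n$ on $E$ lifts naturally to $F_n^{ev}$ by permuting the Clifford generators $e_0, \ldots, e_{n-1}$, and this action further lifts to the spin double cover $\widetilde A_n$, so we can form
\[
  G \;=\; \widetilde A_n \ltimes F_n^{ev}.
\]
The center of $G$ is the direct product of the centers of $\widetilde A_n$ and $F_n^{ev}$, hence has order four, and the quotient $G/Z(G)$ is precisely $H$. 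Pulling the extension class of $G$ back along $A_n \hookrightarrow H$ recovers the Schur class of $A_n$, while pulling back along $E \hookrightarrow H$ recovers $\beta$ (by uniqueness: $F_n^{ev}$ is a non-trivial central extension of $E$ and $\beta$ generates the only non-zero $A_n$-invariant subgroup). Both generators, of $E_2^{2,0}$ and of $E_2^{0,2}$, therefore survive all differentials, the upper bound is attained, $H^2(H; U(1)) \cong (\bbZ/2)^2$, and $G$ is the Schur extension of the perfect group $H$.

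The principal subtlety is the permanence of $\beta$ under the differentials $d_r$ for $r \geq 2$; a priori it might be killed by a $d_2$ or $d_3$ into some column further to the right. Producing $G$ directly sidesteps this computation: once the four-element subgroup of $H^2(H; U(1))$ that $G$ realizes matches the spectral-sequence upper bound of four, the Schur multiplier is pinned down and no further differential analysis is required.
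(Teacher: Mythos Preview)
Your argument is correct and takes a genuinely different route from the paper's. Both proofs start from the same spectral-sequence bound $|H^2(H;U(1))|\leq 4$, but the paper pins down the group structure of $H_2(H)$ as $(\bbZ/2)^2$ by invoking Stembridge's computation of the Schur multiplier of the full hyper-octahedral group $S_n\ltimes\bbF_2^n$ together with a second Lyndon--Hochschild--Serre spectral sequence for the extension $1\to H\to S_n\ltimes\bbF_2^n\to\{\pm1\}^2\to1$. You instead exhibit $(\bbZ/2)^2$ directly inside $H^2(H;U(1))$ by pushing the class of $G$ along the two coordinate projections of its kernel and checking, via restriction to the section $A_n\hookrightarrow H$ and to $E\hookrightarrow H$, that the two resulting classes are independent. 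Your approach is more self-contained, avoiding the external reference; the paper's approach has the advantage of situating the computation within the broader hyper-octahedral picture.

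One small correction: the phrase ``the center of $G$ \ldots\ has order four'' is not accurate for even $n$, since then the volume element $\omega=e_0\cdots e_{n-1}$ lies in the center of $F_n^{ev}$ and is fixed by even permutations, so $Z(G)$ has order at least eight. What you need (and what your argument actually uses) is that the \emph{kernel} of $G\to H$ is the central subgroup $\{\pm1\}\times\{\pm1\}$ of order four. With that rewording the proof stands.
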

\begin{proof}
  The spectral sequence calculation above implies that $H$ is perfect
  and that its Schur multiplier has order $4$. Comparing to the Schur
  multiplier of the hyper-octahedral group $S_n\ltimes \bbF_2^n$, which
  was identified by Stembridge in \cite{Stembridge92} as an
  elementary abelian 2-group of order 8, we conclude that $H_2(H)$ is
  in fact a Klein 4-group. To be precise, this follows from 
  the spectral sequence
  for the extension
  \[
    1\longrightarrow H \longrightarrow S_n\ltimes
    \bbF_2^n\longrightarrow \{\pm1\}^2\longrightarrow 1,
  \]
  taking into account that $H_2(\{\pm1\}^2)$ has order 2.
  Write $\widetilde H$ for the Schur extension of $H$.
  By the universal property of Schur extension, we obtain a map of
  central extensions
  \[
    \begin{tikzcd}
      0\ar[r]& H_2(H)\ar[r] \ar[d] & \widetilde H\ar[r]\ar[d] &
        H\ar[d,equals]\ar[r]&0\\
      0\ar[r]& \{\pm1\}^2 \ar[r] & \widetilde A_n\ltimes F_n^{ev}\ar[r] &
        A_n\ltimes (\bbF_2^n)^{ev}\ar[r]&0\\
    \end{tikzcd}
  \]
  The arrow between the central subgroups must be an isomorphism,
  since the bottom extension cannot be reduced to an extension by $\{\pm1\}$.
  It follows that the middle arrow is an isomorphism, as well.
\end{proof}

As the
Schur extension of the perfect group $A_n\ltimes E$, the group
$\widetilde A_n\ltimes F_n^{ev}$ is super perfect and hence possesses a
categorical Schur extension, giving
the next step in the arithmetic Whitehead tower of the
hyper-octahedral groups (for $n\geq 8$). 
In the stable range,
we have maps
$$\pi_3(\bbS^0) \longrightarrow \widetilde A_n\ltimes
F_n^{ev}\longrightarrow \pi_3(K\bbZ).$$
It is therefore natural to conjecture that the categorical Schur
multiplier of $\widetilde A_n\ltimes F_n^{ev}$ also stabilizes for
large enough $n$ and that its order is a multiple of 24.

\subsection{Arithmetic Whitehead tower for $GL_n(\bbZ)$}
\label{sec:GL_n}
Mason and Stothers
  \cite[Theorem 5.1]{MR338209} have shown that for $n\geq3$ the
  subgroup $E_n(\bbZ)$ of   $SL_n(\bbZ)$ generated by the elementary
  matrices is perfect and coincides with the commutator subgroup of
  $GL_n(\bbZ )$.
We have
\[
  SO_n(\bbZ) \subset E_n(\bbZ).
\]
  The elementary matrices satisfy the following
  relations 
\begin{eqnarray}\label{1}
e_{ij}(a)e_{ij}(b)&=&e_{ij}(a+b)\\\label{2}
e_{ij}(a)e_{kl}(b)&=&e_{kl}(b)e_{ij}(a),\; j\not=k \mbox{ and }i\not=l \\ \label{3}
e_{ij}(a)e_{jk}(b)e_{ij}(a)^{-1}e_{jk}(b)^{-1}&=&e_{ik}(ab);\ i,j,k \mbox{ distinct } \\ \label{4}
e_{ij}(a)e_{ki}(b)e_{ij}^{-1}(a)e_{ki}(b)^{-1} &=& e_{kl}(-ba); \;  i,j,k \mbox{ distinct }
\end{eqnarray}
The universal central extension of $E_n(\bbZ)$ is known as Steinberg
group $St_n(\bbZ )$. It is defined by symbols $x_{i j}(a)$ with
$i\not= j, 1\leq i,j,\leq n, a\in \bbZ $ subject to the relations
(\ref{1})(\ref{2})(\ref{3})(\ref{4}). The kernel of the map 
$$St_n(\bbZ )\lra E_n(\bbZ ); \; x_{ij}(a) \mapsto e_{ij}(a)$$
is the algebraic K homology group $K_2(\bbZ )$. In particular, the
first two homology groups of $St_n(\bbZ )$ vanish and the Schur
multiplier is  
\[
 H_2(E_n(\bbZ ))\cong K_2(\bbZ ) \cong \bbZ/2.
\]
Using standard techniques from algebraic $K$-theory
\cite[5.2.7]{MR1282290} it is not hard to see that 
$$ H_3(St_n(\bbZ))\cong K_3(\bbZ) \cong \bbZ/48.$$
So, we have the tower
\[
    GL_n(\bbZ) \longleftarrow E_n(\bbZ) \xleftarrow{\,\,(\bbZ/2)^2\,\,}
    St_n(\bbZ) \xleftarrow{\,\,\mathbb B \bbZ/48 \,\,}  \mathcal St(\bbZ).
\longleftarrow \ldots
\]
The group $K_3(\bbZ)$ plays a key role in the computation of
the fourth cohomology of the Conway group
\[H^4(Co_1;\bbZ)\cong\bbZ/24 \]
by Johnson-Freyd and Treumann \cite{Johnson-FreydTreumann20}.
\subsection{Integers in cyclotomic fields and string extensions.}
We have seen that good codes lead to interesting representations of
the integral spin groups. We like to see similar phenomena for the
integral string groups. For this purpose,  we consider the complex
Whitehead tower and restrict it to integers in cyclotomic fields. \par 
Let $U_n(\bbC)$ be the group of unitary matrices, that is, complex
isometries of $\bbC^n$. 
Since the fourth cohomology of $BSU_n(\bbC)$
is generated by the second Chern class we obtain a $PU(\mbox{H})$-extension
$\widetilde{SU}_n(\bbC)$ in the same way as in the real case.
t
Let $\bbO$ be the ring of integers in the number field $K=\bbQ(\zeta)$
considered in Section \ref{Lattices}. The integral unitary group 
$$U_n(\bbO)=U_n(\bbC)\cap Gl_n(\bbO)$$ coincides with the semi direct
product of diagonal with permutation matrices  $$U_n(\bbO)=\{\pm
\zeta^i| \, i=0, \ldots , p-1\}^n\rtimes \Sigma_n.$$  A code of length
$n$ can be considered as subset of $U_n(\bbO)$ by identifying  
$\bbF_p$ with the multiplicative group of roots $ \zeta^i$.

Consider the pullback diagram
$$\begin{tikzcd} (\widetilde{\bbF_p} )^n_{det =1}
  \ar[r,hook]\ar[d,swap,"PU(\Hi )"] &\widetilde{SU}_n(\bbO)\ar[d,"PU(\Hi )"]
  \ar[r]&\widetilde{SU}_n(\bbC)\ar[d,"PU(\Hi )"] 
  \\[+3ex]
  (\bbF_p )^n_{det =1}  \ar[r,hook] &SU_n(\bbO)\ar[r]& SU_n(\bbC).
\end{tikzcd}
$$
There are several ways to deal with representations of $PU(\mbox{H})$-extensions of topological groups $G$. One way is to take based loops $\Omega$ and observe that $\Omega PU(\mbox{H})\cong S^1 $. Hence,
$\Omega \widetilde{G}$ is equivalent to an extension of  $\Omega {G}$ by a compact Lie group.  Then one looks for representations which mimic the structure of the loop space of a representation of $\widetilde{G}$. 
\par
This approach does not work in the arithmetic setting since the base spaces are discrete. In our case, however, we may employ the short exact sequence 
$ \bbP \rightarrow \bbO \stackrel{\rho}{\rightarrow} \bbF_p$ of Section \ref{Lattices} 
to obtain the fibre sequence
$$ \bbF_p^n\longrightarrow B \bbP^n \longrightarrow B\bbO^n .$$
Explicitly,  the topological group  $B\bbP$ is the  $(p-1)$-dimensional torus $$T_\bbP =(\bbO \otimes \bbR) /(\bbP \otimes \bbZ) $$ 
and $\bbF_p$ sits in $T_\bbP$ as $\bbO \otimes \{1\} / \bbP \otimes \{ 1\}$. The inclusion of $\bbF_p$ in $U_1(\bbC) \cong \bbR/\bbZ $ factorizes through  $T_\bbP$ by the map 
\begin{eqnarray}
 (a_0+a_1\zeta +\cdots +a_{p-2}\zeta^{p-2}) \mapsto  (a_0+a_1 +\cdots +a_{p-2})/p.
\end{eqnarray}
with $a_i \in \bbR$.  If one chooses the basis 
\begin{eqnarray}\label{basis}
 &1-\zeta , \zeta-\zeta^2, \ldots , \zeta^{p-2}-\zeta^{p-1} &
 \end{eqnarray} of $\bbP$ then the  induced map
$$  (\bbR /\bbZ)^{p-1} \cong  \bbR /\bbZ \otimes \bbP =T_\bbP\stackrel{\rho}{\longrightarrow} \bbR/\bbZ$$
is the projection to the last factor. 
\begin{Lem}
The extension of $({\bbF_p} )^n_{det =1}$ defined by the quadratic form of the lattice $\bbP^n$ coincides with $(\widetilde{\bbF_p} )^n_{det =1}$ which was defined by the second Chern class.  \end{Lem}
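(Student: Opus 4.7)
Both extensions are $K(\bbZ,2)$-extensions of the finite abelian group $G=(\bbF_p)^n_{\det=1}$ and so are classified by elements of $H^4(BG;\bbZ)$. The plan is to show these two classes agree by pulling back to the ambient torus $T_\bbP^n$ and translating them into symmetric bilinear forms on the lattice $\bbP^n$.

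First, I would observe that the inclusion $G\hookrightarrow SU_n(\bbC)$ factors through $G\hookrightarrow T_\bbP^n$ (via the inclusion $\bbF_p\hookrightarrow T_\bbP$ in each coordinate) followed by the coordinatewise application of $\chi=\rho/p\colon T_\bbP\to S^1$, which lands in a maximal torus of $SU_n(\bbC)$ once restricted to the determinant-one subtorus. Consequently, the extension $(\widetilde{\bbF_p})^n_{\det=1}$ is the pullback of the $c_2$-extension of $T_{SU_n}$, while the lattice extension is the pullback of the Heisenberg-type extension of $T_\bbP^n$ classified by the trace form.

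Second, under the standard identification $H^4(BT;\bbZ)\cong\mathrm{Sym}^2(\pi_1 T)^*$, the class $c_2|_{T_{SU_n}}$ corresponds to the restriction of $e_2=\sum_{i<j}x_ix_j$ (equivalently, the basic inner product on the $SU_n$ coroot lattice), while the lattice extension corresponds to the trace form $\langle x,y\rangle=\sum_i\mathrm{Tr}(x_i\bar y_i/p)$ on $\bbP^n$. Both descend to integer-valued symmetric bilinear forms on $\bbP^n$ that must now be compared on $G$.

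The main obstacle is the arithmetic verification that these two bilinear forms give rise to the same class in $H^4(BG;\bbZ)$. This rests on the identity
\[
  \mathrm{Tr}\!\left(\frac{v\bar w}{p}\right)\;\equiv\;-\frac{\tilde a\tilde b}{p}\pmod{\bbZ},
\]
valid for any lifts $v,w\in\bbO$ of residues $a,b\in\bbF_p$ with integer representatives $\tilde a,\tilde b\in\{0,\ldots,p-1\}$. This follows from $\mathrm{Tr}(1/p)=(p-1)/p\equiv -1/p\pmod{\bbZ}$ together with the inclusion $\mathrm{Tr}(\bbP)\subset p\bbZ$, which ensures that the trace modulo $\bbZ$ depends only on the residues $a,b$. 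Summing coordinatewise and invoking the determinant constraint $\sum_i\tilde a_i\equiv 0\pmod p$ to absorb the residual $c_1$-type term $(\sum_i\chi(x_i))(\sum_j\chi(y_j))$ arising from the pullback of $c_2$ shows that both forms agree modulo $\bbZ$ on $G$, yielding the coincidence of the two extension classes.
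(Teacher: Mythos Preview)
Your proposal is correct and follows essentially the same route as the paper. Both arguments reduce to identifying the restriction of the trace form on $\bbP^n$ to $(\bbF_p)^n$ as $\sum_i z_i^2$ (your computation $\mathrm{Tr}(v\bar w/p)\equiv -\tilde a\tilde b/p\pmod\bbZ$ is exactly this statement in bilinear-form language), and then invoking the Newton identity $\sum_i z_i^2=\sigma_1^2-2\sigma_2=c_1^2-2c_2$, which collapses to $-2c_2$ on the $\det=1$ locus. The paper carries this out more directly: it fixes the basis $1-\zeta,\zeta-\zeta^2,\ldots,\zeta^{p-2}-\zeta^{p-1}$ of $\bbP$, writes the Gram form explicitly as $q(x)=\sum x_i^2+x_ix_{i+1}$, observes that its restriction along the last-coordinate projection $T_\bbP\to\bbR/\bbZ$ is the generator $z^2$ of $H^4(B\bbF_p)$, and then applies the polynomial identity. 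Your version avoids the basis choice by computing the trace pairing modulo $\bbZ$, which is a perfectly good substitute; the only cost is that the final step (``absorb the residual $c_1$-type term'') is left a bit implicit, whereas the paper states the identity $\sum z_i^2=\sigma_1^2-2\sigma_2$ outright and notes that $-2$ is a unit mod $p$.
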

\begin{proof}
The quadratic form of the lattice $\bbP$ with respect to the basis (\ref{basis}) has the form
$$ q(x)=\sum_{i=0}^{p-1}x_i^2 +x_ix_{i+1}.$$
As a 4-dimensional cohomology class it restricts to a generator $z^2$ in $H^4B\bbF_p$ by what we said before. The computation
$$\sum_i z_i^2 = \sigma_1^2 -2\sigma_2$$
in the elementary symmetric polynomials $\sigma_i$  implies that the quadratic form of orthogonal sum $\bbP^n$ corresponds to $c_2=\sigma_2$ up to the unit -2 mod $p$  once $c_1$ vanishes.
\end{proof}

In the pull back diagram above, one may hence replace the group
$\bbF_p^n$ with the torus $T_\bbP^n$ and look for loop representation
of the extension 
$\widetilde{\Omega {T_\bbP^n}}$ with the center $S^1 $ acting non trivially. 
This will be done in the next section in the infinitesimal setting. The role of codes in $\bbF_p^n$ will then be disclosed.
\section{Lattice vertex operator algebras and their representations}\label{LVOA}

The mathematical notion of a vertex algebra goes back to the work of Borcherds in 1986 \cite{MR843307} who axiomatized the relations amongst lattice vertex operators.
The definition appears rather technical but  will become more explicit when we look at the example 
$\widetilde{\Omega {T_L}}$ where $L$ is a lattice. 
\subsection{Vertex operator algebras and modules}
A {\em vertex  algebra} consists of the following data:
\begin{enumerate}
\item a ${\bbZ}_+$-graded vector space $V$
\item a vacuum vector $\bbone \in V_0$, $\bbone \not= 0$
\item a linear map $D:V \ra V$ of degree one, $D\bbone =0$
\item a linear map $Y(\cdot, z):V\ra \mbox{End}(V) [\![ z, z^{-1}]\!]$, 
$Y(v,z)=\sum v_n z^{-n-1}$
\end{enumerate}
The endomorphisms $v_n$ are called modes of $v$. These data should satisfy the following properties for all $v,w\in V$ (see \cite{MR1886764}\cite{MR2648364} for details) : 
\begin{enumerate}
\item there is an integer $N=N(w)$ with $v_n(w)=0$ for all $n >N$
\item $ Y(v,z)\in V [\![ z, z^{-1}]\!][z^{-1}]$ and there is a $k\geq 0$ with $$(z_1-z_2)^k[Y(v_1,z_1),Y(v_2,z_2)]=0.$$ This property is called locality and one writes $$ Y(v,z)\sim Y(w,z)$$
if a power of $(z_1-z_2)$ annihilates the bracket.
\item $Y(v,z)\bbone = v+O(z)$
\item $[D,Y(v,z)]=\partial Y(u,z)$ where $\partial$  is the formal derivative with respect to $z$
\end{enumerate}
Note that the operator $D$ is entirely determined by the state-field correspondence $Y$:  property (iv) implies $Dv=v_{-2}\bbone$. 
\par
A {\em vertex operator algebra} is a vertex algebra  which is  equipped with a distinguished state $\omega \in V$. Its modes $L_n=\omega_{n+1}$ of $\omega$ 
should satisfy
\begin{enumerate}
\item $V_n=\{ v \in V| \, L_0v=nv\} $
\item $Y(L_{-1} v,z)=\partial Y(v,z)$
\item $[L_m,L_n]=(m-n)L_{m+n}+\frac{m^3-m}{12}\delta_{m,-n}c\,  id_V$
for some number $c$.
\end{enumerate}
In addition, one requires  $\dim V_n<\infty$, $V_n=0$ for $n<\! \! \!<0$. The state  $\omega$ also carries the name {\em conformal vector}. (It is responsible for the action of the Virasoro algebra, that is, the reparametrization action of loops.)

A {\em module over a vertex operator algebra} is a vector space $M$ together with a linear map
$$ Y_M: V \ra \mbox{End}(M) [\![ z, z^{-1}]\!]; \, v \ra Y_M(v,z)=\sum_n v_n^M z^{-n-1}$$ with the properties:
\begin{enumerate}
\item $Y(\bbone ,z)=id_M$
\item $Y_M(v,z)\sim Y_M(w,z)$
\item for $k >\! \! \!>0$  associativity holds, that is, 
$$ (z_1+z_2)^kY_M(v,z_1+z_2)Y_M(w,z_2)=(z_1+z_2)^kY_M(Y_M(v,z_1)w,z_2).$$
\end{enumerate}
Usually one requires another grading $M=\bigoplus_{\lambda \in \bbC}M_\lambda$ such that
$\dim M_\lambda< \infty$, $M_{\lambda +n}=0$ for $n<\! \! \!<0$ and $L_0m =\lambda m $ for $m\in M_\lambda$. It turns out that irreducible modules over tame vertex operator algebras have this property. Here, a $V$-module is irreducible if it contains  no proper, nonzero submodule (invariant under all modes $v_n^M$). 
\par
Note that if $M$ is irreducible and if $M_{\lambda+n}\not=0$ for some $n\in \bbZ$ then 
$$ M= \bigoplus_{n\in \bbZ}M_{\lambda +n}$$
because the right module is invariant under all modes. Set $h=\lambda+n$ with the smallest $n$ such that $ M_{\lambda +n}\not=0$. This number turns out to be rational and it is called the conformal weight  of $M$. \par
The {\em partition function} of a representation $M$ is the function on the upper half plane given by the formula 
$$ Z (z ) = tr_M q^{L_0^M-c/24}=q^{h-c/24}\sum_{n\geq0} \dim(M_{h+n})q^n$$
where $q=e^{2\pi i z}$. 
\subsection{Lattice algebras}
We now specify to the vertex  algebra associated to a $d$-dimensional lattice $L$ with a non degenerate symmetric bilinear form $q$. We will construct  an infinitesimal  model for $\widetilde{\Omega {T_L}}$ where $T_L=L\otimes S^1$. \par
Set ${\textgoth h}=\bbC^d$ and think of ${\textgoth h}$ as the abelian complexified Lie algebra of $T_L$. Define the Heisenberg algebra
$$\hat{\textgoth h}= {\textgoth h} \otimes \bbC [t,t^{-1}] \oplus \bbC K$$
with central element $K$ and  brackets
$$[v\otimes t^m,w\otimes t^n]=m \left< v,w \right> \delta_{m,-n}K.$$
The first summand in $\hat{\textgoth h}$ corresponds to loops in $T_L$ and the second one gives a central extension by $S^1$. This already is a Lie algebra model for  $\widetilde{\Omega {T_L}}$. However, the loop space of a group and its extension has more structure than just being a group. For example, one can reparameterize loops. This action should be incorporated into the  model. \par
The Heisenberg algebra comes with a triangular decomposition given by 
$$ \hat{\textgoth h}^\pm=\oplus_{\pm n>0}{\textgoth h}\otimes t^n; \; \hat{\textgoth h}^0={\textgoth h}\oplus \bbC K.$$
Let $M$ be a  ${\textgoth h}$-module $X$.  Let $l$ be a scalar. Extend the action of ${\textgoth h}$ to $\hat{\textgoth h}^+\oplus \hat{\textgoth h}^0$ by letting $\hat{\textgoth h}^+$ annihilate $M$ and by letting  $K$ act by multiplication with the level $l$. Let $V_{\textgoth h}(l,M)$ be the induced $\hat{\textgoth h}$-module. Explicitly, 
\begin{eqnarray}\label{Borcherds}
 V_{\textgoth h}(l,M) = {\mathcal U}(\hat {\textgoth h})\otimes_{{\mathcal U}(\hat {\textgoth h}^0\oplus \hat {\textgoth h}^+)}M \cong {\mathcal U}(\hat {\textgoth h}^-)\otimes M \cong S(\hat {\textgoth h}^-)\otimes M
 \end{eqnarray}
where ${\mathcal U}$ denotes the universal enveloping algebra and $S$ is the symmetric algebra. Set 
$$ Y(v,z)=\sum_nv_{(n)}z^{-n-1}$$
and $v_{(n)}$ is the action  of $v\otimes t^n$ on $V_{\textgoth h}(l,M)$. This state-field correspondence $Y$ makes  $$ M_0^d\stackrel{def}{=} V_{\textgoth h}(1,\bbC \bbone)$$ into a vertex algebra. The conformal vector is
$$ \omega = \frac{1}{2} \sum_{i=1}^d (v_i)_{ (-1)}(v^i)_{(-1)}  \bbone$$
and central charge is $d$. Here, $\{ v_i\}$ is a basis of $\bbC^d$ and $\{ v^i\}$ is the dual basis with respect to $Q$.
Moreover, $V_{\textgoth h}(1,M)$ is a module over $M_0^d$  (see \cite{MR2648364} for details).\par
Now assume that $
L$ is an even lattice in ${\textgoth h}=\bbR^d$ with respect to a  positive definite real quadratic form $q_{\textgoth h}$ and let ${\textgoth h}$ and $q$ be the complexifications. 
The irreducible representations $M_\alpha$ of ${\textgoth h}$ are one-dimensional and indexed  by elements of the dual space $\alpha \in {\textgoth h}^*$. Set
$$V_L=\bigoplus_{\alpha \in L} V_{\textgoth h}(1,M_{\alpha^\vee})\cong S(\hat{\textgoth h}^-)\otimes \bbC[L]$$
where $\alpha^\vee$ denotes the dual of $\alpha$.
The construction makes $V_L$ into a module over the Heisenberg vertex algebra $M_0^d$. 
\par
With some effort, the vector space $V_L$ can itself be given the structure of a vertex operator algebra:
for $e^\alpha \in M_{\alpha^\vee}$ set
$$ Y(e^\alpha,z)=\exp(\sum_{n>0}\frac{\alpha_{(-n)}}{n}z^n) \exp(\sum_{n<0}\frac{\alpha_{(-n)}}{n}z^n) e^\alpha z^\alpha.$$
Here,  $\alpha_n$ are the modes of $Y(\alpha,z)$ in $M_0^d$, $z^\alpha : v \otimes e^\beta \mapsto z^{q(\alpha,\beta)}v\otimes e^\beta$ for $v \in \hat{ {\textgoth h}^-}$ and $e^\alpha:v \otimes e^\beta \mapsto \epsilon(\alpha,\beta)v \otimes e^{\alpha + \beta}$ for a certain bilinear 2-cocycle $\epsilon:L\otimes L \ra \{ \pm 1\} $.
This imposes the structure of a vertex algebra on $V_L$. See the textbook \cite{MR1651389} or \cite{MR996026} for details.

\begin{Pro}(see \cite[Equation (75)]{MR2648364})\label{Dongtheta}
The partition function of $V_L$ is 
$$  Z_{V_L}(z)= \eta(q)^{-d} \sum_{\alpha\in L}q^{\left< \alpha, \alpha \right>/2 }=
\eta(q)^{-d} \vartheta_L(q) $$
where $\eta$ is the Dedekind $\eta$-function
$$\eta(q)=q^{1/24}\prod_{n\geq 1}(1-q^n).$$
\end{Pro}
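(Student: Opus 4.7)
The plan is to compute the graded trace $\operatorname{tr}_{V_L} q^{L_0-c/24}$ directly from the tensor product decomposition
\[
V_L \;\cong\; S(\hat{\mathfrak h}^-)\,\otimes\, \mathbb C[L]
\]
given in the construction, noting that the central charge is $c=d$. The operator $L_0 = \omega_1$ respects the tensor structure, so the trace factors as a product of two simpler traces, which we handle separately.

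First I would determine the action of $L_0$ on the bosonic Fock factor $S(\hat{\mathfrak h}^-)$. Using the commutation relation $[L_0, v\otimes t^{-n}] = n\,(v\otimes t^{-n})$, which follows from the Virasoro bracket applied to the explicit conformal vector $\omega = \tfrac{1}{2}\sum_i (v_i)_{(-1)}(v^i)_{(-1)}\mathbf 1$, each generator $v_i\otimes t^{-n}$ has weight $n$. Since $\hat{\mathfrak h}^- = \bigoplus_{n\geq 1}\mathfrak h\otimes t^{-n}$ has $d$-dimensional summand in each negative degree, the graded dimension of the symmetric algebra is
\[
\operatorname{tr}_{S(\hat{\mathfrak h}^-)} q^{L_0} \;=\; \prod_{n\geq 1}\frac{1}{(1-q^n)^{d}}.
\]

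Next I would compute the contribution of the lattice factor. On the summand $V_{\mathfrak h}(1,M_{\alpha^\vee})$ the vacuum-like vector $e^\alpha$ has $L_0$-eigenvalue $\tfrac{1}{2}\langle \alpha,\alpha\rangle$; this is a standard computation using $Y(e^\alpha,z)$ and the zero-mode $z^\alpha$ term, or equivalently the formula $L_0 e^\alpha = \tfrac12 \alpha_{(0)}^2 e^\alpha$. Summing over $\alpha\in L$ gives the lattice theta series
\[
\operatorname{tr}_{\mathbb C[L]} q^{L_0} \;=\; \sum_{\alpha\in L} q^{\langle \alpha,\alpha\rangle/2} \;=\; \vartheta_L(q).
\]

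Combining the two factors and inserting the central-charge shift $q^{-d/24}$ yields
\[
Z_{V_L}(z) \;=\; q^{-d/24}\prod_{n\geq 1}(1-q^n)^{-d}\,\vartheta_L(q) \;=\; \eta(q)^{-d}\,\vartheta_L(q),
\]
using the definition of $\eta$ stated in the proposition. The only nontrivial step is verifying the weight of $e^\alpha$; once the Virasoro relation $[L_0, \alpha_{(n)}] = -n\,\alpha_{(n)}$ and the action $\alpha_{(0)} e^\beta = \langle \alpha,\beta\rangle e^\beta$ are in hand, it follows that the bosonic oscillator and lattice contributions decouple cleanly and the formula assembles as stated.
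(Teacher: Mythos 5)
Your computation is correct and is the standard argument behind this formula: the paper itself gives no proof, only the citation to \cite[Equation (75)]{MR2648364}, and what you write is precisely the trace computation carried out there --- $L_0$ acts diagonally on $S(\hat{\mathfrak h}^-)\otimes\bbC[L]$, the oscillator factor contributes $\prod_{n\geq1}(1-q^n)^{-d}$, the vector $e^\alpha$ has weight $\langle\alpha,\alpha\rangle/2$, and the shift $q^{-c/24}=q^{-d/24}$ assembles the product into $\eta(q)^{-d}\vartheta_L(q)$. So your proposal matches the intended (cited) proof; the only point deserving the care you already gave it is the $L_0$-eigenvalue of $e^\alpha$, which comes from the zero modes of the conformal vector $\omega=\tfrac12\sum_i (v_i)_{(-1)}(v^i)_{(-1)}\bbone$ acting on $\bbC[L]$.
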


The irreducible modules of $V_L$ were computed by Dong. They are indexed by elements of the set $C_f=L^\vee/L$ where $L^\vee$ denotes the dual lattice
$$ L^\vee = \{ \beta \in \bbR^d |\, Q(\alpha, \beta)\in \bbZ \mbox{ for all }\alpha \in L\}.$$
For $\lambda \in C_f$ the module  takes the form
$$V_{L+ \lambda } =\bigoplus_{\alpha \in L}V_{\textgoth h}(1, M_{(\alpha+\lambda)^\vee})\cong
S(\hat{{\textgoth h}}^-)\otimes \bbC [L+\lambda ]$$
and its partition function is
$$ Z_{V_{L+\lambda }}(z)= \eta(q)^{-d} \sum_{\alpha\in L}q^{\left< \alpha + \lambda,  \alpha + \lambda\right>/2 }.
$$
\begin{thm}[\cite{MR1245855}]\label{Dong}
The $V_L$-modules  $\{V_{L+\lambda}\}$ for  ${\lambda \in C} $ are all distinct and they provide a complete list for the isomorphism classes of  irreducible $V_L$-modules. 
\end{thm}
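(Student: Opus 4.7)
The plan has two parts: first show the modules $\{V_{L+\lambda}\}_{\lambda\in L^\vee/L}$ are pairwise non-isomorphic, then show every irreducible $V_L$-module is one of them.

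For distinctness, I would examine the joint spectrum of the Heisenberg zero modes $\alpha_{(0)}$, $\alpha\in{\textgoth h}$. On the summand $S(\hat{\textgoth h}^-)\tensor \bbC e^{\beta+\lambda}$ of $V_{L+\lambda}$, the operator $\alpha_{(0)}$ acts by the scalar $q(\alpha,\beta+\lambda)$. Hence the joint spectrum, viewed as a subset of ${\textgoth h}^*$ via $q$, is precisely the coset $\lambda+L$. Two distinct classes in $L^\vee/L$ give disjoint cosets and therefore non-isomorphic $V_L$-modules.

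For exhaustiveness, let $M$ be an irreducible $V_L$-module. Restriction along $M_0^d\subset V_L$ makes $M$ a module over the Heisenberg algebra $\hat{\textgoth h}$; the central charge constraint fixes the level to be $1$. Since the $L_0$-grading of $M$ is bounded below, there is a vector $v\in M$ annihilated by $\hat{\textgoth h}^+$ on which ${\textgoth h}\subset\hat{\textgoth h}^0$ acts by some weight $\mu\in{\textgoth h}^*$; the $\hat{\textgoth h}$-submodule it generates is isomorphic to $V_{\textgoth h}(1,M_\mu)$. Now the intertwiner $Y_M(e^\alpha,z)$ for $\alpha\in L$ shifts the ${\textgoth h}$-weight by $\alpha$, and the leading $z$-exponent on a weight-$\mu$ vector is $q(\alpha,\mu)$. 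For $Y_M(e^\alpha,z)$ to lie in $\on{End}(M)[\![z,z^{-1}]\!]$ (only integer powers of $z$), one needs $q(\alpha,\mu)\in\bbZ$ for every $\alpha\in L$, i.e.\ $\mu\in L^\vee$. Setting $\lambda=\mu+L\in L^\vee/L$, the vectors $Y_M(e^\alpha,z)v$ for $\alpha\in L$ produce an $\hat{\textgoth h}$-equivariant embedding of $S(\hat{\textgoth h}^-)\tensor\bbC[L+\lambda]$ into $M$, and this is easily upgraded to a nonzero $V_L$-module map $V_{L+\lambda}\to M$ using the cocycle $\epsilon$ from the construction of $V_L$. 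Both modules being irreducible, this map is an isomorphism.

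The main obstacle is the integrality step $\mu\in L^\vee$ together with the verification that the resulting map $V_{L+\lambda}\to M$ is compatible with the full state--field correspondence and not merely with the Heisenberg action; this requires a careful use of the Jacobi identity (or equivalently associativity and locality) for the operators $Y_M(e^\alpha,z)$ and $Y_M(e^\beta,z)$, and consistency with the 2-cocycle $\epsilon$ on $L$. A cleaner route, which sidesteps some of this bookkeeping, is to compute Zhu's associative algebra and to show $A(V_L)\cong\bbC[L^\vee/L]$ as a commutative semisimple algebra of dimension $|L^\vee/L|$; its irreducible representations are then indexed by $L^\vee/L$, and the standard bijection between irreducible $V_L$-modules and irreducible $A(V_L)$-modules gives the classification directly.
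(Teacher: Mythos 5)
The paper itself gives no argument for this statement: Theorem \ref{Dong} is quoted directly from Dong \cite{MR1245855}, so your proposal can only be measured against the standard proof. Your main route is essentially that standard (Dong's) proof, and it is sound in outline. Distinctness via the joint spectrum of the zero modes $\alpha_{(0)}$, which is exactly the coset $\lambda+L\subset{\textgoth h}^*$, is correct. For exhaustiveness, the integrality step ($q(\alpha,\mu)\in\bbZ$ for all $\alpha\in L$, hence $\mu\in L^\vee$, forced by $Y_M(e^\alpha,z)$ having only integer powers of $z$) is the right mechanism. The points you gloss are the standard but nontrivial ones: that $Y_M(e^\alpha,z)v\neq0$ (which follows because $\{v\in V_L: Y_M(v,z)=0\}$ is an ideal and $V_L$ is simple); that the map you build intertwines the full state--field correspondence and not just the Heisenberg action --- in the usual argument one works with the whole vacuum space $\Omega_M=\{w\in M:\hat{\textgoth h}^+w=0\}$, identifies $M\cong S(\hat{\textgoth h}^-)\tensor\Omega_M$, and analyses $\Omega_M$ as an irreducible module over the central extension of $L$ determined by $\epsilon$, rather than propagating a single vector; and that $V_{L+\lambda}$ is itself irreducible, which is part of the assertion being proved and which you invoke rather than establish.

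The ``cleaner route'' you offer, however, rests on a false computation: $A(V_L)$ is \emph{not} isomorphic to $\bbC[L^\vee/L]$ in general. Zhu's algebra of a rational VOA is semisimple with one matrix block $\on{End}(M_{top})$ for each irreducible module $M$, and the top level of $V_{L+\lambda}$ has dimension equal to the number of minimal-norm vectors in the coset $L+\lambda$, which is usually larger than one. The example most relevant to this paper already refutes the claim: for $p=3$ the lattice $\bbP$ is the root lattice $A_2$, so $V_{\bbP}$ is the level-one affine $sl_3$ vertex operator algebra, the two nontrivial cosets each contain three minimal vectors, and $A(V_{\bbP})\cong\bbC\oplus \on{Mat}_3(\bbC)\oplus \on{Mat}_3(\bbC)$, a noncommutative algebra of dimension $19$, not $3$. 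The Zhu-algebra strategy can be repaired --- one proves that $A(V_L)$ is semisimple with exactly $|L^\vee/L|$ simple summands, whose irreducible modules are the top levels of the $V_{L+\lambda}$ --- but as stated the claimed isomorphism, its commutativity, and its dimension are all wrong, so this route does not ``sidestep the bookkeeping''; it hides it in a computation that is at least as delicate as the direct argument.
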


\subsection{The  Eisenstein vertex algebra and its representations}
Let $\bbO$ be the ring of integers of the cyclotomic field $\bbQ[\zeta]$ as considered before.  Let $C$ be a linear code in $\bbF_p^n$ with $C\subset C^\perp$.  
In section \ref{Lattices} we defined the even lattice $\Gamma_C\subset \bbO^n$ with bilinear form
$$ \langle x,y\rangle =\sum_{i=1}^n \mbox{Tr}\left( \frac{x_i\bar{y}_i}{p}\right).$$
Its dual is the lattice $\Gamma_{ C^\perp}$. In particular, for $L=\Gamma_{0}=\bbP^n$ this means
$$(\bbP^n)^\vee=\bbO^n. $$
In the following, the associated vertex operator algebra $V_n=V_{\bbP^n}$ will be called {\em  Eisenstein  vertex algebra}. 
Theorem \ref{Dong} tells us that the isomorphism classes of irreducible representations of $V_n$ are indexed by words $w$ of the full code $C_f=\bbO^n/\bbP^n\cong \bbF_p^n$. 
\par
Let $\mR ep(V_n)$ be the free group generated by these isomorphism classes. 
Lattice algebras associated to positive-definite even lattices are rational, that is, any  module is a direct sum of irreducible modules (see \cite{MR2097833}). These sums are necessary finite because of the restrictions on the dimensions. We conclude that $\mR ep(V_n)$ coincides with the Grothendiek group of the category of $V_d$-modules. 
Set $$\mR ep(V)=\sum_{n\geq 0}\mR ep(V_n)$$ with $\mR ep(V_0)= \bbZ$. It has a ring structure since
$$\mR ep (V_{{n+n'}})\cong \mR ep (V_{n}) \otimes \mR ep (V_{{n'}}).$$
The automorphism group 
$\mbox{Aut}(C_f)=(\bbF_p^\times)^n \rtimes \Sigma_n$
acts on $\mR ep (V_{n})$ in the obvious way. It contains the subgroup  $$\mbox{Aut}_\bbR(C_f)= \{\pm 1\}^n \rtimes \Sigma_n $$   of {\em real} automorphisms. \par
Before formulating the main result we need to introduce some notation. For a ring $R$  let $\mR ep _R(V)$ be the product $\mR ep (V)\otimes R$. It is the free $R$-module generated by the irreducibles. Let  $mf(p)$ be the ring of modular forms for the congruence group 
$\Gamma(p)$
with $q$-expansion at $\infty$ in $\bbO[\frac{1}{p}][\![ q^{1/p}]\!]$. Similarly, write $hmf^{\Sigma}({\textgoth p})$ for the  $\bbO[\frac{1}{p}]$-algebra of symmetric Hilbert modular forms for $\Gamma({\textgoth p})$ generated by $\theta_j$ for  $j=0, \ldots, ,r$. 

\begin{Exa}\label{35}
For $p=3$, the ring $mf(3)_\bbC$ is freely generated by $\theta_0,\theta_1$  (see  f.i.\cite[Theorem 5.4]{MR2977354}). They have $q$-expansions at $\infty$
\begin{eqnarray*}
\theta_0&=&\sum_{(x,y)\in \bbZ^2}q^{x^2-xy+y^2} = 1+6(q+q^2+q^4+2q^7\ldots)\\
\theta_1& =&q^{\frac{1}{3}}\sum_{(x,y)\in \bbZ^2}q^{x^2-xy+y^2+x-y}=
3q^{\frac{1}{3}}(1+q+2q^2+2q^4+\ldots ).
\end{eqnarray*}
The first two coefficients of $\theta_0$ and $\theta_1$ imply that $mf(3)$ itself is freely generated by $\theta_0,\theta_1$and hence 
\begin{eqnarray}\label{hmf3}
 hmf^{\Sigma}({\textgoth 3}) =mf(3)= \bbO[{\textstyle \frac{1}{3}}][\theta_0, \theta_1].
 \end{eqnarray}
For $p=5$,
Hirzebruch has proved in \cite{MR0480355} that 
the ring of complex symmetric Hilbert modular forms is a polynomial ring in $\theta_0, \theta_1, \theta_2$. This implies
\begin{eqnarray}\label{hmf5}
 hmf^{\Sigma}({\textgoth 5})\cong \bbO[{\textstyle \frac{1}{5}}][\theta_0, \theta_1, \theta_2].\end{eqnarray}
\end{Exa}

 \begin{thm} \label{Main}
 
 \begin{enumerate}
\item  
The partition function induces a well defined ring map 
\begin{eqnarray*} Z: \mR ep_{\bbO[\frac{1}{p}]} (V)/\mbox{Aut}_\bbR(C_f)&\stackrel{}{\lra}& mf(p)\\
\left[ M \right] &\mapsto& \eta^{n(p-1)}Z_M
\end{eqnarray*} 
\item The map $Z$ is the composite of a map of graded rings 
$$ \tilde{Z}: \mR ep_{\bbO[\frac{1}{p}]} (V)/\mbox{Aut}_\bbR(C_f) \longrightarrow hmf^{\Sigma}({\textgoth p})$$ 
with the map induced by the diagonal.
\item 
The map $\tilde{Z}$ is an isomorphism  for $p=3$ and  $p=5$. 
\end{enumerate}
\end{thm}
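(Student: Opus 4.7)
The plan is to trace each irreducible $V_n$-module through its partition function, identify its image with an explicit monomial in the $\theta_j$, and then compare bases on both sides of $\tilde Z$.

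First I would apply Theorem \ref{Dong} to identify the irreducibles of $V_n=V_{\bbP^n}$ with $C_f=\bbO^n/\bbP^n\cong\bbF_p^n$. For $w\in\bbF_p^n$, the formula for the partition function stated just after Theorem \ref{Dong}, combined with Proposition \ref{Dongtheta}, gives
$$\eta(q)^{n(p-1)}\,Z_{V_{\bbP^n+w}}(q)\,=\,\vartheta_{\bbP^n+w}(q).$$
Since $\bbP^n+w=\prod_i(\bbP+w_i)$ is an orthogonal coset decomposition, the associated $r$-variable Hilbert theta factors as $\prod_{i=1}^n\theta_{w_i}$, and its diagonal restriction recovers $\vartheta_{\bbP^n+w}$, as in the proof of Corollary \ref{Sl2}. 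I will therefore set
$$\tilde Z\bigl([V_{\bbP^n+w}]\bigr)\,=\,\prod_{i=1}^n\theta_{w_i}\,\in\,hmf^{\Sigma}({\textgoth p})$$
and extend $\bbO[\tfrac{1}{p}]$-linearly. For parts (i) and (ii), freeness of $\mR ep(V_n)$ on irreducibles (Dong) makes this extension canonical; permutation invariance is automatic from commutativity of multiplication, while the identity $\theta_j=\theta_{-j}$, obtained by substituting $x\mapsto -x$ in the defining sum, implements $\{\pm1\}^n$-invariance, so $\tilde Z$ descends to the $\mbox{Aut}_\bbR(C_f)$-quotient. Tensor product of modules corresponds to concatenation of coset labels, which matches multiplication of $\theta$-monomials, so $\tilde Z$ is a graded ring homomorphism; composing with the diagonal restriction $hmf^{\Sigma}({\textgoth p})\to mf(p)$ (which lands in $mf(p)$ by Corollary \ref{Sl2}) then produces $Z$.

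For part (iii) I would invoke Example \ref{35}: for $p=3$ and $p=5$ the target is a polynomial ring $hmf^{\Sigma}({\textgoth p})=\bbO[\tfrac{1}{p}][\theta_0,\dots,\theta_r]$, whose degree-$n$ piece has as $\bbO[\tfrac{1}{p}]$-basis the monomials $\theta_0^{k_0}\cdots\theta_r^{k_r}$ with $k_0+\cdots+k_r=n$. On the source side, an orbit of $\{\pm1\}^n\rtimes\Sigma_n$ on $\bbF_p^n$ is determined by the multiset of classes $\pm w_i\in\bbF_p/\{\pm1\}=\{0,1,\dots,r\}$, hence parameterized by tuples $(k_0,\dots,k_r)$ with $\sum k_i=n$. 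These orbits give a free $\bbO[\tfrac{1}{p}]$-basis of the degree-$n$ piece of $\mR ep_{\bbO[\frac{1}{p}]}(V)/\mbox{Aut}_\bbR(C_f)$, and $\tilde Z$ sends the orbit $(k_0,\dots,k_r)$ exactly to $\theta_0^{k_0}\cdots\theta_r^{k_r}$; thus it is a degree-preserving bijection on bases, hence a graded isomorphism.

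The main obstacle is the target-side polynomial description in Example \ref{35}, which quotes Hirzebruch's theorem for the complex generators of $hmf^{\Sigma}({\textgoth p})$. Promoting that statement from $\bbC$- to $\bbO[\tfrac{1}{p}]$-coefficients is the genuinely subtle point: it requires a $q$-expansion argument at $\infty$ and the observation that the leading coefficients of the $\theta_j$ are units in $\bbO[\tfrac{1}{p}]$, as illustrated for $p=3$ in Example \ref{35}. Everything else---orbit bookkeeping, the identification of partition functions with theta products, and the ring-homomorphism check---is a direct unwinding of definitions.
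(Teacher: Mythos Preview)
Your proposal is correct and follows essentially the same route as the paper: identify irreducibles via Dong's theorem, compute $\eta^{n(p-1)}Z_{V_{\bbP^n+w}}=\vartheta_{\bbP^n+w}$, factor this as the diagonal of the monomial $\prod_i\theta_{w_i}$ (the paper packages this step as an application of Theorem~\ref{GeerHirz} to the singleton code $\{w\}$, which is your orthogonal coset decomposition), and then match orbit bases to monomial bases using the polynomial descriptions in Example~\ref{35}. Your added remarks on the ring structure and on promoting the Hirzebruch polynomial-generator statement from $\bbC$ to $\bbO[\tfrac1p]$ are more explicit than the paper's treatment but do not change the argument.
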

\begin{proof} 
Irreducible representations of $V_n$ are indexed by words $w\in \bbF_p^n$ and have the partition function $\eta^{-n(p-1)}   \vartheta_{\{ w\}}$ by Proposition \ref{Dongtheta}. This expression coincides with the diagonal of 
$$  \theta_{\{ w\}}= \theta^{l_0(c)}_0\theta_1^{l_1(c)}\cdots \theta_{r}^{l_r(c)}(z)$$
by Theorem \ref{GeerHirz} and  hence is invariant under the action of $\mbox{Aut}_\bbR(C_f)$. Since each $\theta_j $ lies in $hmf^{\Sigma}({\textgoth p})$ we obtain a well defined map $\tilde{Z}$ and the first two assertions are shown. \par
For the last claim, observe that the target is a polynomial ring in the $\theta_j$s by Equations  (\ref{hmf3})(\ref{hmf5}) and the map sends a representation indexed by symmetrized word to the corresponding monomial.
\end{proof}
\begin{Cor}
For a code $C\subset \bbF_p^n$ define the $V_n$-module
$ M_C=\sum_{w\in C}V_{\bbP^n +w} $. Suppose $C$ is self dual.
\begin{enumerate}
\item
The partition function $Z$ of $M_C$ is a $SL_2(\bbZ)$-invariant modular form.
\item
The refined partition function $\tilde{Z}$ is a $SL_2({\textgoth O})$-invariant Hilbert modular form for $p=5$.
\end{enumerate}
\end{Cor}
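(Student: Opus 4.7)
For part (i), the plan is to unpack the definition of $Z$ from Theorem \ref{Main} and reduce to Corollary \ref{Sl2}. By construction $M_C$ decomposes as the direct sum of the irreducible modules $V_{\bbP^n + w}$ for $w\in C$, so Proposition \ref{Dongtheta} gives
\[
Z(M_C) \,=\, \eta^{n(p-1)}\, Z_{M_C} \,=\, \sum_{w\in C}\vartheta_{\bbP^n + w} \,=\, \vartheta_{\Gamma_C}.
\]
Self-duality of $C$ makes $\Gamma_C$ unimodular by (\ref{dual}), and Corollary \ref{Sl2} immediately delivers $SL_2(\bbZ)$-invariance.

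For part (ii), the same bookkeeping, this time together with Theorem \ref{GeerHirz}, yields $\tilde Z(M_C) = \theta_{\Gamma_C} = W_C(\theta_0,\theta_1,\theta_2)$, which is a symmetric Hilbert modular form of weight $n$ for the congruence subgroup $\Gamma({\textgoth p})$. To upgrade to invariance under the full Hilbert modular group $SL_2({\textgoth O})$ (with $\textgoth O$ the ring of integers of the real subfield $k = \bbQ(\sqrt 5)$, which has class number one) I would exploit that $SL_2({\textgoth O})$ is generated by the translations $T_a = \bmat{1 & a\\ 0 & 1}$ for $a\in {\textgoth O}$ together with the Fricke involution $S = \bmat{0 & -1\\ 1 & 0}$. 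Invariance under all $T_a$ is immediate from the evenness of $\Gamma_C$ and its stability under translation by $\textgoth O\subset \bbO$.

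The main step, and the main obstacle, is the behaviour under $S$. My plan is to apply multi-variable Poisson summation, one copy per real embedding $\sigma_l$ of $k$, to the Gaussian $\exp\bigl(2\pi i\, \mathrm{Tr}_k(z\, x\bar x/p)\bigr)$ summed over $x\in \Gamma_C$. This converts $\theta_{\Gamma_C}(-1/z_1,\dots,-1/z_r)$ into the analogous sum over the dual lattice $\Gamma_C^\vee$, multiplied by $\prod_{l=1}^r z_l^n$ together with a factor equal to the covolume of $\Gamma_C$. Unimodularity forces the covolume to be $1$, and (\ref{dual}) together with $C = C^\perp$ identifies $\Gamma_C^\vee$ with $\Gamma_C$; the transform therefore reproduces $\theta_C$ and establishes $S$-invariance. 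The delicate point, which I expect to be the main obstacle, is the Gaussian integral in the cyclotomic coordinates: one must verify that the Fourier transform factors cleanly across the embeddings $\sigma_l$ and produces exactly the weight $\prod_l z_l^n$. This is the Hilbert-modular analogue of the classical calculation for $\bbZ$-lattices and proceeds by identical means once the pairing is diagonalised over $k\otimes_\bbQ \bbR\cong \bbR^r$.
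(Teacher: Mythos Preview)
Your argument for part (i) is correct and matches the paper's: both reduce to Corollary \ref{Sl2} via the identification $Z(M_C)=\vartheta_{\Gamma_C}$.

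For part (ii) you take a genuinely different route. The paper does not carry out any Poisson summation at all; instead, having written $\tilde Z(M_C)=W_C(\theta_0,\theta_1,\theta_2)$ via Theorem~\ref{Main}, it simply invokes a theorem of Gleason--Pierce and Sloane (as presented in Ebeling, Corollaries~5.4--5.5). That result says that the weight enumerator of a self-dual $\bbF_5$-code is invariant under the finite matrix group describing how $SL_2({\textgoth O})$ acts on the vector $(\theta_0,\theta_1,\theta_2)$; invariance of $W_C(\theta_0,\theta_1,\theta_2)$ under the full Hilbert modular group follows immediately. In effect the paper outsources the transformation law to coding theory (ultimately the MacWilliams identity), whereas you propose to prove the Hilbert-modular theta transformation for $\Gamma_C$ directly. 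The two are of course closely related---MacWilliams is the finite-abelian shadow of Poisson summation---but your approach is self-contained analysis while the paper's is a one-line citation.

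Two small points in your plan deserve care. First, $T_a$-invariance for all $a\in{\textgoth O}$ is not a consequence of evenness of $\Gamma_C$ alone: evenness gives $\mathrm{Tr}_k(x\bar x/p)\in\bbZ$, but you need the stronger statement $\mathrm{Tr}_k(a\cdot x\bar x/p)\in\bbZ$ for every $a\in{\textgoth O}$, i.e.\ $x\bar x/p$ lies in the inverse different of $k$. This does hold---it follows from $\sum_i c_i^2\equiv 0\pmod p$ for $c\in C\subset C^\perp$---but it is a genuine additional check, and ``stability under translation by ${\textgoth O}$'' is not the right phrase for it. Second, class number one is not by itself enough to conclude that $S$ and the $T_a$ generate $SL_2({\textgoth O})$; you need ${\textgoth O}$ to be Euclidean (which $\bbZ[\frac{1+\sqrt5}2]$ is, so you are fine, but the justification should be adjusted).
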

\begin{proof}
The first statement follows from Theorem \ref{Main} and Corollary \ref{Sl2}. The second statement for $p=5$ follows from Theorem \ref{Main} and a result of Gleason-Pierce and Sloane (see \cite{MR398664},\cite[Corollary 5.4,5.5]{MR2977354}).
\end{proof}
\begin{Rem}
For $p\geq 5$ the  refined partition function $\tilde{Z}$ should have a more intrinsic description in terms of traces of Eisenstein vertex operators. Also, it would be worth to know
the representations which allow the Doi-Naganuma lift (c. Remark \ref{DN}). 
\end{Rem}
\begin{Rem}
There is an action of $SL_2(\bbZ)$ on the ring of integral $\Gamma(3)$-modular forms: if $\theta$ is a modular form of weight $n$ and level 3 then
$$\theta\left(\frac{az+b}{cz+d}\right)(cz+d)^{-n}$$ 
is again a modular form of weight $n$  and level 3. Since the subgroup $\Gamma(3)$ acts trivially it is an action of $$G_1=SL_2(\bbF_3)\cong SL_2(\bbZ)/\Gamma(3).$$ Explicitly, the generators $S$ and $T$ act on $\theta_0$  by
\begin{eqnarray*}
\theta_0\left(\frac{-1}{z}\right)&=&\frac{z (-1-2\zeta)}{3}(\theta_0(z)+2\theta_1(z))\\
\theta_0(z+1)&=&\theta_0(z)
\end{eqnarray*}
and on $\theta_1$ by
\begin{eqnarray*}\label{action S}
\theta_1\left(\frac{-1}{z}\right)&=&\frac{z (-1-2\zeta)}{3}(\theta_0(z)-\theta_1(z))\\
\theta_1(z+1)&=&\zeta \theta_1(z).
\end{eqnarray*}
The fix point ring  
\begin{eqnarray}\label{fix}
mf(3)^{G_1}&=& mf_{\bbO[\frac{1}{3}]}
\end{eqnarray}
coincides with the ring of integral  modular forms for the full group
$SL_2(\bbZ)$. \end{Rem}

 \section{Topological modular forms with level structure}\label{section tmf}
 Atiyah-Bott-Shapiro have discovered the role of Clifford algebra
 representations in $K$-theory  \cite{MR0167985}. We have seen in
 Section \ref{AWT} that these correspond to representations of the
 finite group $F_n$ in the arithmetic Whitehead tower and are best
 understood using coding theory. In Section \ref{LVOA}, we investigated
 the representations  of the string extension of the corresponding
 group in the complex arithmetic Whitehead tower. Theorem \ref{Main}
 gives a correspondence of its representation ring with modular forms
 for the congruence subgroup $\Gamma(N)$. In this section, we like to
 interpret this result as a higher version of the
 Atihay-Bott-Shapiro construction. The higher version of $K$-theory will be  $TMF(N)$,
 the cohomology theory of topological modular forms with full level
 $N$ structure. \par 
\subsection{The spectrum $TMF(N)$} Elliptic cohomology has its origin in the study of elliptic genera. F.\ Hirzebruch \cite{MR981372} generalized the earlier notion of S.\ Ochanine from level 2 to higher levels and constructed ring maps
$$ \chi_*:  MU_* \longrightarrow mf(N)_* $$
where $MU_* $ is the bordism ring of stably almost complex manifolds. These elliptic  genera 
come along with rigidity results for the equivariant index of the Dirac operator on  manifolds with circle actions. E.\ Witten related them to index theory on  loop spaces of manifolds with string structures. The Witten genus is a ring map  
$$ W: MString_* \longrightarrow mf_* $$
where $MString$ denotes the cobordism theory of manifolds with string structures. 
 \par
In the late 80s, Landweber-Ravenel-Stong \cite{MR1320998} introduced a
cohomology theory, nowadays denoted by  $TMF_1(2)$, for the congruence
group $\Gamma_1(2)$. It was generalized to other levels by A.\ Baker
\cite{MR1271552}, J.L.\ Brylinski \cite{MR1071369} (with 2 inverted)
and by the second author with the help of a result by J.\ Franke \cite{MR1235295}.
The method given in
\cite{MR1660325} shows that the Hirzbruch genera define periodic
homology and cohomology  theories given by 
\begin{eqnarray}\label{CF}
 TMF(N)_*(X)&=& MU_*(X) \otimes_{\chi_*}mf(N)_*[\Delta^{-1}].
 \end{eqnarray}
The corresponding ring spectra come with 
 ring maps $$\chi : MU \rightarrow TMF(N)$$
but these complex orientations are not compatible amongst each other along the maps
$TMF(N)\ra TMF(NM)$.
  A family version of the Witten genus $$W: MString \longrightarrow tmf$$ was achieved by M.\ Hopkins et al.\cite{MR1989190}\cite{MR3223024}. Its target is the spectrum of topological modular forms $tmf$ (or $TMF$ for the periodic version). Its coefficient ring  maps to the ring $mf_*$ of integral modular forms. This map is neither surjective nor injective but rationally it is an isomorphism.  \par
There are basically two different constructions of the spectrum $tmf$ as a highly structured ring spectrum. One construction is based on its localizations with respect to Morava $K$-theories (see \cite[Chapter 11]{MR3223024}). It uses the fact that its $K(2)$-localization is the Lubin-Tate theory $E_2$. This theory originates from deformations of the elliptic curve mentioned in Section \ref{sec:LT}.  Its $K(1)$-localization is related to the theory of $p$-adic modular forms (see \cite{MR2076927}). The other construction uses derived algebraic geometry, that is, algebraic geometry with ordinary rings replaced by  highly structured ring spectra. The reader is referenced to the survey article \cite{MR2597740}. For a construction of the connective theories $tmf(N)$ along these lines see \cite{MR3455154}.
\par
There is an equivalence of homotopy fixed point spectra
$$ TMF[1/p] \ra TMF(p)^{h {Gl}_2(\bbF_p)}$$
which should be regarded as a homotopy version of Equation (\ref{fix}). The spectral sequences
$$ H^s(Gl_2(\bbF_p), \pi_{2t}TMF(p))\Longrightarrow \pi_{2t-s}TMF[1/p]$$
where computed by Mahowald-Rezk in \cite{MR2508904} for $p=3$ and by Behrens-Omsky in \cite{MR3572338} for $p=5$.
\par
For $p=3$, the proposed generalization of the Atiyah-Bott-Shapiro
theorem is the following: 

\begin{Cor}\label{Main3}
\begin{enumerate}
\item
The partition function $Z$ gives an isomorphism
\begin{eqnarray*} \mR ep_{\bbO[\frac{1}{3}]} (V)_n/\mbox{Aut}_\bbR(C_f)&\stackrel{\cong}{\lra}& tmf(3)^{0}(S^{2n}).
\end{eqnarray*} 
\item If $C$ is a self dual ternary code of length $n$ then there is an element of $tmf^0(S^{2n})$ whose image in $tmf(3)^{0}(S^{2n})$ is $Z(M_C)$.
\end{enumerate}
\end{Cor}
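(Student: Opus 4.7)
For part (i), the plan is to chain Theorem \ref{Main}(iii) with the identification $\pi_{2n}\,tmf(3)\cong mf(3)_n$. Theorem \ref{Main}(iii) together with Example \ref{35} supplies a graded ring isomorphism
\[
 \tilde Z\colon \mR ep_{\bbO[\frac{1}{3}]}(V)/\mbox{Aut}_\bbR(C_f)\xrightarrow{\,\cong\,} hmf^{\Sigma}({\textgoth p}) = mf(3) = \bbO[\tfrac{1}{3}][\theta_0,\theta_1],
\]
under which representations of $V_n$ are matched with weight-$n$ modular forms, since a code word of length $n$ produces a degree-$n$ monomial in the $\theta_j$ and each $\theta_j$ has weight $1$. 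The remaining identification $tmf(3)^0(S^{2n}) = \pi_{2n}\,tmf(3) \cong mf(3)_n$ follows from the Hill--Lawson construction of connective $tmf(3)$ of \cite{MR3455154}: the compactified moduli stack of elliptic curves with full level-$3$ structure is affine over $\bbZ[\tfrac{1}{3}]$ with coordinate ring $\bbO[\tfrac{1}{3}][\theta_0,\theta_1]$, forcing the descent spectral sequence to collapse at $E_2$.

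For part (ii), the starting point is Corollary \ref{Sl2}: since $C$ is self-dual, $Z(M_C)$ is $SL_2(\bbZ)$-invariant and hence lies in the fixed subring $mf(3)^{G_1}=mf_{\bbO[1/3]}$ of Equation~(\ref{fix}); moreover the $q$-expansion of $\vartheta_C$ has coefficients in $\bbZ$, so the class is genuinely integral. The homotopy fixed point equivalence $TMF[\tfrac{1}{3}]\simeq TMF(3)^{hGL_2(\bbF_3)}$ recalled in Section~\ref{section tmf} then converts this $GL_2(\bbF_3)$-invariance into a lift of $Z(M_C)$ to $tmf[\tfrac{1}{3}]^0(S^{2n})$, via the edge homomorphism of the spectral sequence $H^s(GL_2(\bbF_3),\pi_\ast TMF(3))\Rightarrow \pi_\ast TMF[\tfrac{1}{3}]$.

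The main obstacle is then promoting this to an element of $tmf^0(S^{2n})$ itself, that is, handling the $3$-local lift. Since the edge map $\pi_{2n}\,tmf\to mf_n$ is in general neither injective nor surjective, integrality by itself is not enough. I would attack this using an arithmetic fracture square at the prime $3$, combined with the $K(2)$-local description $tmf_{K(2)}\simeq E_2^{hGL_2(\bbF_3)}$ arising from the Lubin--Tate curve $y^2+y=x^3$ over $\bbF_4$ discussed in Section~\ref{section tmf}. Because $GL_2(\bbF_3)$ is simultaneously the automorphism group of this curve and of the standard ternary code $\mathcal T$ of Section~\ref{sec:LT}, partition functions of self-dual codes should match classes produced $K(2)$-locally by $E_2$-theory, and gluing across the fracture square should yield the desired element of $tmf^0(S^{2n})$. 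A cleaner alternative would be to realize $Z(M_C)$ as the Witten genus of a suitable string manifold and invoke the Ando--Hopkins--Rezk orientation $W\colon MString\to tmf$.
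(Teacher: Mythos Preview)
For part (i) your argument is the paper's: Theorem~\ref{Main}(iii) at $p=3$ together with the identification $\pi_{2n}\,tmf(3)\cong mf(3)_n$ is exactly what ``reformulation of Theorem~\ref{Main} for $p=3$'' means; you have simply spelled out the latter identification in more detail than the paper does.

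For part (ii) there is a genuine gap, and the paper's route is entirely different from yours. Your main line---show $Z(M_C)$ is $SL_2(\bbZ)$-invariant and then lift via the homotopy fixed point spectral sequence---already needs more care even away from~$3$ (landing in $H^0$ is necessary but not automatically sufficient; the edge homomorphism goes \emph{from} $\pi_*\,TMF[\tfrac13]$ \emph{to} the invariants, and one must check nothing obstructs the lift), and as you yourself say, the $3$-local step is left to speculative fracture-square or $K(2)$-local matching that is not actually carried out.

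The paper bypasses all of this. The key structural input is Equation~(\ref{dual}): a self-dual ternary code $C$ produces an even \emph{unimodular} lattice $\Gamma_C$, and $Z(M_C)$ is, after the $\eta$-normalization, the theta series of this lattice. Hopkins' result \cite[Proposition~5.12]{MR1989190} then supplies the lift to $\pi_*\,tmf$ directly: theta series of even unimodular lattices lie in the image of $\pi_*\,tmf\to mf_*$. No invariance argument, no fracture square, and no explicit string manifold are required. Your ``cleaner alternative'' via the Witten genus is in the right spirit---Hopkins' proposition is ultimately proved through the string orientation---but the paper's point is that once you recognise $\Gamma_C$ as even unimodular, the cited result closes the argument in one line.
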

\begin{proof}
The first claim only is a reformulation of Theorem \ref{Main} for the case $p=3$. The second claim follows from Equation (\ref{dual}) and \cite[Proposition 5.12]{MR1989190}.
\end{proof}
H.\ Tamanoi argued  in \cite{MR1716689} that the equivariant index of the formal Dirac operator on loop spaces is a virtual module over some vertex operator algebra. Of course, this is a consequence of Corollary \ref{Main3} for $p=3$, since the index coincides with the elliptic genus and hence takes values in the ring of $\Gamma(3)$-modular forms.  
This suggests  that  there is a geometric construction of the cohomology theory $tmf(3)$ along these lines.
\par
It turns out that bundles whose fibers are  representations of the Eisenstein vertex algebra lead to a cohomology theory $K_V$ by the usual construction: one takes the Grothendieck group of isomorphism classes and inverts the representation which is attached to the discriminant $\Delta$ to get a periodic theory. The corollary then says that $K_V$ has  the same  coefficients as $TMF(3)$. However, using results from  Dong-Liu-Ma-Zhou  \cite{MR2106231}, it is not hard to see that one only gets a form of $K$-theory
$$K_V(X)= K(X)\otimes mf(3)_*[\Delta^{-1}]$$
since the associated genus is  equivalent to the Todd genus. Bundles of representations hence do not lead to the correct elliptic cocycles. On the other hand, Equation (\ref{CF}) implies that a cohomology theory with the same coefficients and the correct Euler classes has to coincide with $TMF(3)$. 

\subsection{Sheaves of vertex operator algebras and Euler classes} 
 Bundles correspond to quasi-coherent sheaves. Gorbounov-Malikov-Schechtman-Vaintrob \cite{MR1748287} constructed  a more general sheaf $MSV(X)$  of vertex operator algebras   for each  complex manifold $X$.  It comes with a multiplication map
 $$ {\mathcal O}(X) \otimes MSV(X)\longrightarrow MSV(X) $$
by functions on $X$ which, however,   is not associative.   
 There is  a filtration of $MSV(X)$ compatible with the multiplication such that the associated graded is the coherent sheaf (cf.\cite{MR1757003})
 $$ ell_X= \Lambda_{y^{-1}}T^*X \otimes\bigotimes_{n=1}^\infty \Lambda_{yq^n}T^*X\otimes \bigotimes_{n=1}^\infty S_{q^n}T^*X\otimes
   \bigotimes_{n=1}^\infty \Lambda_{y^{-1}q^n}TX\otimes \bigotimes_{n=1}^\infty S_{q^n}TX.$$
 Take $y=-\zeta$ and consider $ell_X$ as an element of 
 $$K_{Tate(N)}(X) =K(X)_{\bbO[1/N]}((q^{1/N})).$$ 
 \begin{Pro}\label{Euler}
Up to a normalization factor,   $ell_X$ coincides with the Euler class of $TX$ in  $K_{Tate(N)}$ once $K_{Tate(N)}$ is equipped with the orientation coming from the 
  level $N$-elliptic character \cite{MR1660325} \cite{MR1781277}
 $$\lambda: TMF(N) \longrightarrow K_{Tate(N)}.$$
 \end{Pro}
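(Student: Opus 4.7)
The plan is to apply the splitting principle to $TX$ and reduce to the case of a single complex line bundle, and then recognize the resulting infinite product as a Jacobi theta function of the type governing the Euler class in $K_{Tate(N)}$ under the orientation induced by $\lambda$. The class $ell_X$ is multiplicative in $TX$, so the splitting principle turns the problem into an identity for one Chern root at a time, which is exactly how Euler classes in oriented cohomology theories are computed.

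First I would describe the Euler class side. The level $N$-elliptic character $\lambda$ is attached to the Tate curve equipped with a point $P_\zeta$ of exact order $N$ controlled by the root of unity $\zeta$, and its orientation has exponential series of the form $x\longmapsto \vartheta(x;q)/\vartheta'(0;q)$, where $\vartheta(\,\cdot\,;q)$ is the Jacobi theta function whose divisor encodes $P_\zeta$. Hence for a complex line bundle $L$ with $x=c_1(L)$, the $K_{Tate(N)}$-Euler class of $L$ is, up to a universal normalization depending only on rank, the evaluation of this theta function at $x$. On the other side, when $TX$ is replaced by a line bundle $L$, each factor in $ell_X$ becomes elementary and the full product is
\[
ell_L \;=\; (1-y^{-1}L^{-1})\prod_{n\ge 1}(1-yq^{n}L^{-1})(1-y^{-1}q^{n}L)(1-q^{n}L)(1-q^{n}L^{-1}).
\]
Setting $y=-\zeta$ and invoking the Jacobi triple product identity converts this product into a $q$-expansion of the theta function attached to $P_\zeta$ evaluated at $x$. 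Multiplying over the Chern roots of $TX$ and reconstituting yields the identification of $ell_X$ with the Euler class up to a normalizing power of the Dedekind $\eta$ function (absorbing the rank contribution) and a root-of-unity prefactor from the specialization $y=-\zeta$.

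The main obstacle I anticipate is not conceptual but a careful alignment of conventions: precisely which point of exact order $N$ on the Tate curve is used in the definition of $\lambda$, which conventions govern $q$ versus $q^{1/N}$, and what overall $\eta$-power must be inserted so that both sides land in the same graded piece of $K_{Tate(N)}$. Once these are harmonized in the way already done in \cite{MR1660325} and \cite{MR1781277}, the proposition follows directly from the product-to-theta identification indicated above; in effect the statement packages the fact that the Hirzebruch level $N$ genus of $X$ is computed by $\lambda$ applied to the Euler class of $TX$, which is classically given by the integrand $ell_X$.
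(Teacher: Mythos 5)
Your overall strategy is the one the paper itself uses: its proof is precisely a Chern character computation deferred to \cite[p.117f]{MR1189136}, i.e.\ split $TX$ into line bundles, expand each tensor factor of $ell_X$ as a $q$-product over Chern roots, identify the result with the Euler-class series of the orientation $\lambda\circ\chi$, and fix the normalization (powers of $\eta$, root-of-unity constants, $q^{1/N}$-conventions) as in \cite{MR1660325}, \cite{MR1781277}. So conceptually you are on the same track as the paper.

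There is, however, a concrete error in your reduction to a line bundle which, taken literally, breaks the key identification. The exterior powers contribute factors $1+y^{-1}L^{-1}$, $1+yq^{n}L^{-1}$, $1+y^{-1}q^{n}L$ (minus signs appear only after the substitution $y=-\zeta$), and, more importantly, the symmetric powers $S_{q^n}T^*X\otimes S_{q^n}TX$ contribute $\bigl((1-q^{n}L)(1-q^{n}L^{-1})\bigr)^{-1}$: these belong in the denominator, not in the numerator as in your display. Hence $ell_L$ is not a single Jacobi triple product but a quotient of theta-type products, namely (up to constants) $(1-\zeta^{-1}L^{-1})\prod_{n\ge 1}\frac{(1-\zeta q^{n}L^{-1})(1-\zeta^{-1}q^{n}L)}{(1-q^{n}L^{-1})(1-q^{n}L)}$. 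Correspondingly, the Euler class of $L$ under the level $N$ elliptic character is not the value of a single theta function: the exponential of that orientation is an $N$-division-point function, i.e.\ a quotient of theta functions, which is exactly what distinguishes the level $N$ case from the sigma-orientation/Witten-genus case you are implicitly describing. With the denominators restored on the $ell_X$ side and the correct theta quotient on the Euler-class side, the two expressions match factor by factor up to a normalization constant independent of $L$, and your splitting-principle argument closes; as written, the ``product-to-theta'' step would not yield the claimed identity.
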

 \begin{proof}
This can easily be verified with a Chern character computation (see \cite[p.117f]{MR1189136}).
\end{proof}
Proposition \ref{Euler} suggests that $MSV(X)$ should rather be considered as  an element of $TMF^{2n}(X)$, the elliptic Euler class,  which maps under $\lambda$ to $ell_X$. Up to a normalization factor, the elliptic character hence is the map which takes the associated graded object. 
\par
The normalization factor also plays a role in Corollary \ref{Main3} where the considered objects have the same spirit. In fact, the construction of $MSV(X)$ given in \cite{MR2294219} resembles Equation (\ref{CF}) where the module $M$ is replaced by a quasi-coherent sheaf. Lian-Linshaw even provided equivariant versions of $MSV(X)$ in the same article  for $X$ which are equipped with an action of a compact Lie group. Needless to say that such geometric interpretations in terms of vertex operator algebras would tremendously enhance the theory of topological modular forms and conformal field theories.
 \noindent 
\bibliographystyle{amsalpha}
\bibliography{bibCVOTMF}
\end{document}